\newtheorem{prop}{Proposition}[section]
\newtheorem{thm}[prop]{Theorem}
\newtheorem{lem}[prop]{Lemma}
 \newtheorem{coro}[prop]{Corollary}
\newtheorem{deft}[prop]{Definition}
\newtheorem{exe}[prop]{Example}
\newtheorem{rem}[prop]{Remark}
\newcommand{\D}{\mathbb{D}}
\newcommand{\dst}{{\displaystyle}}
\newcommand{\rea}{\operatorname{Re}}
\newcommand{\ima}{\operatorname{Im}}
\newcommand\CC{\mathbb{C}}
\newcommand\TT{\mathbb{T}}
\newcommand\DD{\mathbb{D}}
\newcommand\NN{\mathbb{N}}
\newcommand\RR{\mathbb{R}}
\newcommand\wkl{\widetilde{k}_\lambda}
\newcommand{\cp}{\mathbb{C^+}}
\newcommand{\vep}{\varepsilon}
\DeclareMathOperator{\Lin}{Lin}
\DeclareMathOperator{\ctg}{ctg}
\begin{document}
\title[\sf Geometry of reproducing kernels in model spaces near the boundary]
{Geometry of reproducing kernels in model spaces near the boundary}
\author[Baranov, Hartmann, Kellay]{A. Baranov,  A. Hartmann, K. Kellay}
\address{A. Baranov\\ Department of Mathematics and Mechanics\\ St. Petersburg State University\\
St. Petersburg\\ Russia}
\email{anton.d.baranov@gmail.com}

\address{A. Hartmann \& K. Kellay \\IMB\\Universit\'e Bordeaux I\\
351 cours de la Liberation\\33405 Talence \\France}
\email{Andreas.Hartmann@math.u-bordeaux1.fr}
\email{kkellay@math.u-bordeaux1.fr}

\keywords{model space, reproducing kernel, Riesz sequence, uniform minimal system, minimal system, overcompleteness, quasi-analyticity}
\subjclass[2000]{46E22, 30H10, 30J05, 42C30}
\thanks{The work was supported by the Russian Science Foundation grant no 14-41-00010.}

\begin{abstract}
We study two geometric properties of reproducing kernels in model spaces $K_\theta$
where $\theta$ is an inner function in the disc: overcompleteness 
and existence of uniformly minimal
systems of reproducing kernels which do not contain Riesz basic sequences. 
Both of these properties are related to the notion of the Ahern--Clark point. It is shown that
``uniformly minimal non-Riesz"$ $ sequences of reproducing kernels
exist near each Ahern--Clark point which is not an analyticity point for $\theta$, while
overcompleteness may occur only near the Ahern--Clark points of infinite order
and is equivalent to a ``zero localization property". In this context the notion of
quasi-analyticity appears naturally, and as a by-product of our results we give conditions in the
spirit of Ahern--Clark for the restriction of a model space to a radius to be a class of
quasi-analyticity.
\end{abstract}
\maketitle

\section{Introduction and Main Results}

Let $H^2=H^2(\DD)$ denote the standard Hardy space in the unit disk $\DD$, and let 
$\theta$ be an inner function in $\DD$. The {\it model} (or {\it star-invariant}) 
{\it subspace} $K_\theta$ of $H^2$ is then defined as
$$
K_\theta=H^2\ominus \theta H^2.
$$
According to the famous Beurling theorem, any closed subspace of $H^2$ 
invariant with respect to the backward
shift in $H^2$ is of the form $K_\theta$. 
For the numerous applications of model spaces in operator theory 
and in operator-related complex analysis see \cite{nik1, nik2}.

Recall that the function
$$
k_\lambda(z) =
k^\theta_\lambda(z) =
\frac{1-\overline{\theta(\lambda)}\theta(z)}{1-\overline \lambda z}
$$
is the {\it reproducing kernel} for the space $K_\theta$
corresponding to the point $\lambda\in\mathbb{D}$, that is, $(f, k^\theta_\lambda) = f(\lambda)$ 
for any function $f\in K_\theta$.  
We usually omit the index $\theta$
when it is clear from the context which model space we consider.
 In what follows
we denote by $\wkl$ the normalized reproducing kernel, that is, $\wkl = 
k_{\lambda}/\|k_{\lambda}\|_2$.

Geometric properties of systems of reproducing kernels in model spaces
is a deep and important subject which is studied extensively, see 
\cite{fr2, bar06, mp, mp1, bb, bbb, bbb1} for the study of completeness
and \cite{hnp, fr1, bar05, bms, bl, bdhk} for the results about bases 
of reproducing kernels.
The main reason for that is that the geometric properties of reproducing kernels
in a Hilbert space of analytic functions are related to the 
intrinsic analytic properties 
of the space. Let us mention several of such connections:
\begin{itemize}
\item
a system of reproducing kernels $\{k_\lambda\}_{\lambda\in \Lambda}$ is complete
in $K_\theta$ if and only if $\Lambda$ is a uniqueness set for $K_\theta$, i.e.,
if $f\in K_\theta$ vanishes on $\Lambda$, then $f=0$;
\item 
$\{\wkl\}_{\lambda\in \Lambda}$ is a Riesz basic sequence if and only if 
$\Lambda$ is an {\it interpolating sequence}, i.e., for every data 
$a_\lambda$ such that $\sum_{\lambda\in \Lambda} |a_\lambda|^2 \|k_\lambda\|^{-2} <\infty$,
there exists a solution $f\in K_\theta$ of the interpolation 
problem $f(\lambda) = a_\lambda$, $\lambda\in\Lambda$;
\item
$\{\wkl\}_{\lambda\in \Lambda}$ is  a Riesz basis if and only if 
$\Lambda$ is a {\it complete  interpolating sequence}, i.e., the above interpolation 
problem has a unique solution.
\end{itemize}

Another motivation relies on the fact that  systems of eigenfunctions 
of certain second order differential equations 
are canonically unitarily equivalent 
to systems of reproducing kernels in model spaces (see \cite{hnp} or \cite{mp}).

We are interested in the following two problems. The first of them, posed 
by Nikolai Nikolski,
is related to the overcompleteness phenomenon in  model spaces. Recall that a system of vectors 
$\{x_n\}$  in a separable Banach space $X$ is said to be {\it overcomplete} if every subsequence
$\{x_{n_k}\}$ is complete in $X$. Such sequences have for instance been discussed by
Szeg\H{o} who showed that the sequence $\frac{\dst 1}{\dst t+\lambda_n}$ is overcomplete in
$C([0,1])$ whenever $\lambda_n\to\infty$ (see \cite{CH}). 
Klee \cite{Kl} has shown that every separable Banach space 
possesses overcomplete sequences. 
For a system of reproducing kernels $\{k_{\lambda_n}\}$ 
in a reproducing kernel Hilbert space of functions analytic in some domain $\Omega$,
a trivial reason for being overcomplete is that $\lambda_n$ tends to some point 
$\lambda_0\in\Omega$. In most of the classical spaces (e.g., the Hardy or the Bergman space
in the disc) there are no "nontrivial"$ $ overcomplete  systems of reproducing kernels.
However, in model spaces such systems may exist.
\medskip
\\
{\bf Problem 1.} {\it Describe those inner functions $\theta$ for which
there exists $\Lambda=\{\lambda_n\}\subset \DD$ such 
that $\{k_\lambda\}_{\lambda\in \Lambda}$ is overcomplete in $K_\theta$, but 
$\Lambda$ has no accumulation points in the domain of analyticity of  the elements 
of $K_\theta$.  }
\medskip

This problem was explicitly addressed by Chalendar, Fricain and Partington
in \cite{CFP} (we refer also to \cite{FrT}). In \cite{CFP} it was shown that 
the overcompleteness for reproducing kernels in model spaces
is related to the notion of the Ahern--Clark point for $K_\theta$.
Recall that for an inner function $\theta$, the Ahern--Clark set is defined by
$$
AC(\theta) =\Big\{\zeta\in \TT \text{ : } \sum_{j}
\frac{1-|z_j|^2}{|\zeta-z_j|^2}+\int_\TT
\frac{d\nu(z)}{|\zeta-z|^2}<\infty\Big\},
$$
where $z_j$ are  the zeros of $\theta$ (counting multiplicities) and $\nu$ is the singular
measure generating its singular factor. 

Recall that $AC(\theta)$ is exactly the set where $\theta$ has a unimodular
non-tangential boundary value and a finite angular derivative. 
Such points are also referred to as Julia or Carath\'eodory points.
Ahern and Clark showed in \cite{AC} that a function $f\in K_\theta$ 
has a finite non-tangential limit at a point $\zeta\in \TT$ if and only if 
$\zeta\in AC(\theta)$. In this case the function $k_\zeta$ belongs to $K_\theta$
and is the reproducing kernel at the boundary point $\zeta$.
Note also that if we denote by $\sigma(\theta)$ 
the {\it boundary spectrum of $\theta$}: $\sigma(\theta)
=\{\zeta\in\TT:\liminf_{z\to \zeta}|\theta(z)|=0\}$, 
then $\theta$ admits analytic continuation through $\TT\setminus\sigma(\theta)$. 
Thus, clearly,  $\TT\setminus \sigma(\theta) \subset AC(\theta)$, however the 
most interesting situation for us is when $\zeta\in \sigma(\theta) \cap AC(\theta)$.

Concerning the overcompleteness problem, it is shown in 
\cite{CFP} that if $\{k_{\lambda}\}_{\lambda\in \Lambda}$
is overcomplete, then (assuming $AC(\theta)\neq \TT$)
$$
\text{dist}\,(\Lambda, \TT\setminus AC(\theta)) >0.
$$ 
For the proof see Corollary \ref{rep5}.

One of our main results says that the overcompleteness is equivalent 
to the {\it localization property} introduced recently by Abakumov, Belov
and the first author in \cite{ABB} in the context of de Branges spaces (i.e., 
essentially, in the case of model spaces $K_\theta$ such that
$\sigma(\theta)$ consists of one point). Recall that the Stolz angle 
$\Gamma_\gamma$, $\gamma>1$, at the point $\zeta\in \TT$ is defined as 
$\Gamma_\gamma(\zeta) = \{ z\in \DD: |z-\zeta|\le \gamma(1-|z|)\}$.
                                                   
\begin{deft} Let $\zeta\in \sigma(\theta)$. We say that the space $K_\theta$ 
has localization property at the point $\zeta$, if any nonzero $f\in K_\theta$ 
has only finitely many zeros in any Stolz angle at $\zeta$.  
\end{deft}

In this definition, Stolz angle can be replaced by any 
region of the form $\{z\in \DD: |z-\zeta|^N \leq \gamma (1-|z|)\}$ 
for some $N, \gamma>0$ (see Lemma \ref{rep}).
It should be observed that in our more general setting 
the different conditions of localization given in \cite{ABB} are no longer equivalent. The very
term of localization introduced in \cite{ABB}
means that the zeros of the functions in the space (the de Branges spaces they consider)
are localized in certain regions. For our definition we
pick the condition which, on the contrary, claims the existence of almost zero free regions (almost 
meaning up to a finite number). Still, our definition says that the zeros are localized outside a
Stolz angle (or more general domains, see Lemma \ref{rep}).

We can now state our first result.

\begin{thm}
\label{loc}
Let $\zeta\in \TT$. 
Then
the following statements are equivalent:

\begin{enumerate}
\item 
There exists a sequence $\lambda_n\to \zeta$ such that 
$\{k_{\lambda_n}\}$ is overcomplete. 

\item $K_{\theta}$ has the localization property at $\zeta$. 
\end{enumerate}
Moreover, if these conditions are satisfied, then $\zeta\in AC(\theta)$. 
\end{thm}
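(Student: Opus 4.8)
The proof will establish the cycle $(1) \Rightarrow (2) \Rightarrow (1)$ and then, under either condition, deduce $\zeta \in AC(\theta)$.

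For $(1) \Rightarrow (2)$: Suppose $\{k_{\lambda_n}\}$ is overcomplete with $\lambda_n \to \zeta$, but some nonzero $f \in K_\theta$ has infinitely many zeros in a Stolz angle $\Gamma_\gamma(\zeta)$; call this zero set $\{w_k\}$, a subsequence of which (by passing to a subsequence) still accumulates only at $\zeta$. The idea is to build a subsequence $\Lambda' \subset \Lambda$ that is \emph{not} complete, contradicting overcompleteness. Since $f$ vanishes at each $w_k$, the key is a perturbation/proximity argument: choose $\lambda_{n_k}$ extremely close to $w_k$ — fast enough that, using the reproducing kernel estimate $|g(w_k) - g(\lambda_{n_k})| \le \|g\| \cdot \|k_{w_k} - k_{\lambda_{n_k}}\|$ together with norm control inside a Stolz angle — one can solve a Blaschke-type correction and produce a nonzero $h \in K_\theta$ vanishing on all of $\Lambda' = \{\lambda_{n_k}\}$. (The technically cleanest route is: $f/(z - w_k)$-type divisions are not internal to $K_\theta$, so instead one should normalize $\widetilde{k}_{w_k}$ and $\widetilde{k}_{\lambda_{n_k}}$ and note $\|\widetilde{k}_{w_k} - \widetilde{k}_{\lambda_{n_k}}\| \to 0$ super-fast; then $f$, which is orthogonal to... no — rather one perturbs $f$ itself.) More robustly: the failure of localization gives, via a normal families / Montel argument on the functions in $K_\theta$ restricted to the Stolz angle, a nonzero limit function vanishing on a set dense enough near $\zeta$ to be orthogonal (after the super-close replacement) to $\{k_{\lambda_{n_k}}\}$.

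For $(2) \Rightarrow (1)$: Assuming localization at $\zeta$, take \emph{any} sequence $\lambda_n \to \zeta$ lying in a fixed Stolz angle $\Gamma_\gamma(\zeta)$. Given a subsequence $\{\lambda_{n_k}\}$, suppose $f \in K_\theta$ vanishes on it; then $f$ has infinitely many zeros (the $\lambda_{n_k}$) in $\Gamma_\gamma(\zeta)$, so by localization $f = 0$. Hence $\{k_{\lambda_{n_k}}\}$ is complete for every subsequence, i.e. $\{k_{\lambda_n}\}$ is overcomplete. This direction is short and essentially tautological once the definitions are unwound; one must only confirm such a sequence $\lambda_n \to \zeta$ inside a Stolz angle exists (it does trivially, e.g. on the radius).

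For the final clause $\zeta \in AC(\theta)$: this is the main obstacle and the place where real work is needed. One argues by contradiction: if $\zeta \notin AC(\theta)$, then $\theta$ has no finite angular derivative at $\zeta$, and we must produce a nonzero $f \in K_\theta$ with infinitely many zeros in a Stolz angle at $\zeta$ — violating localization, hence (by the equivalence just proved) ruling out overcompleteness near $\zeta$. Since $\zeta \notin AC(\theta)$ exactly means $\sum_j \frac{1-|z_j|^2}{|\zeta - z_j|^2} + \int_\TT \frac{d\nu(z)}{|\zeta - z|^2} = \infty$, one expects $K_\theta$ to be "large" near $\zeta$; the construction of an oscillating function in $K_\theta$ with zeros marching into a Stolz angle should exploit this divergence — for instance, if $\theta$ has zeros $z_j \to \zeta$ non-tangentially one can try partial products, or if the singular mass concentrates at $\zeta$ one uses functions like $(1 - \theta(\alpha z))/(1 - \overline{\alpha}z)$-type combinations whose zero sets can be steered. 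I expect this step to require the distinction highlighted in the paper's discussion of \cite{ABB} — namely that this weakest localization condition still forces the Ahern--Clark summability — and to be proved as a separate lemma, likely invoking or paralleling the distance estimate $\mathrm{dist}(\Lambda, \TT \setminus AC(\theta)) > 0$ from \cite{CFP} cited above (Corollary \ref{rep5}), applied to the zero set of the constructed $f$.
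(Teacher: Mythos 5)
Your direction $(2)\Rightarrow(1)$ is correct and is exactly the paper's argument (a radial sequence works, since any subsequence of it lies in a Stolz angle and localization makes it a uniqueness set). The two other parts, however, contain genuine gaps. For $(1)\Rightarrow(2)$ the real difficulty is this: given an \emph{arbitrary} sequence $\lambda_n\to\zeta$ assumed overcomplete, and a nonzero $f\in K_\theta$ with infinitely many zeros $w_k$ in a Stolz angle, you must produce a nonzero function of $K_\theta$ vanishing \emph{exactly on a subsequence of the given} $\lambda_n$. Your perturbation idea ("choose $\lambda_{n_k}$ extremely close to $w_k$") cannot work as stated, because $\Lambda$ is prescribed in advance and may stay far from the Stolz angle and from the $w_k$ (e.g.\ it may approach $\zeta$ tangentially); there is no freedom to place the $\lambda_{n_k}$ near the zeros. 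The fallback "normal families / Montel argument producing a limit function vanishing on a set dense enough near $\zeta$ to be orthogonal to $\{k_{\lambda_{n_k}}\}$" is not an argument: orthogonality to $k_{\lambda_{n_k}}$ means vanishing precisely at $\lambda_{n_k}$, and no mechanism is given that relocates zeros onto $\Lambda$. The paper's proof supplies exactly this mechanism (Proposition \ref{rep1}, building on Lemma \ref{rep}): transfer to the upper half-plane via \eqref{bab10}, use the characterization $g\in K_\Theta\iff \overline{g}\,\Theta\in H^2(\cp)$ of \eqref{bab20}, and trade zeros by multiplying and dividing by lacunary canonical products — from $f$ with zeros in a Stolz angle one first gets a function vanishing at imaginary points $iy_n$, then replaces the product $E=\prod(1-z/iy_n)$ by an even more lacunary product $G=\prod(1-z/\lambda_{n_k})$ over a subsequence of the given $\Lambda$ with $|G|\le|E|$ on $\RR$, so that $fG/E\in K_\Theta$ vanishes on $\{\lambda_{n_k}\}$. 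Nothing equivalent to this zero-relocation step appears in your proposal.

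The final clause $\zeta\in AC(\theta)$ is also left unproved: you only state an expectation that a hard construction (from divergence of the Ahern--Clark sum, build $f\in K_\theta$ with infinitely many zeros in a Stolz angle) should exist, without carrying it out. This is both incomplete and harder than necessary. The paper obtains the claim by a soft functional-analytic route: an overcomplete family contains no Riesz subsequence (deleting one element of a Riesz subsequence destroys completeness); by Lemma \ref{rep4} together with Aleksandrov's density of $K_\theta\cap C(\overline{\DD})$ (Corollary \ref{supcond}), if $\sup_n\|k_{\lambda_n}\|=\infty$ then the normalized kernels have a weakly null, hence Riesz, subsequence; therefore $\sup_n\|k_{\lambda_n}\|<\infty$, i.e.\ $\limsup_n\frac{1-|\theta(\lambda_n)|^2}{1-|\lambda_n|^2}<\infty$, and the Julia--Carath\'eodory theorem gives $\zeta\in AC(\theta)$ (Corollary \ref{rep5}). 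You should replace your sketched construction by this argument, or at least supply a complete proof of the construction you envisage.
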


The most interesting case of this theorem is when $\zeta\in \sigma(\theta)$ since
outside the spectrum of $\theta$ every function $f\in K_{\theta}$ has analytic continuation
which immediately gives localization and overcompleteness. 

Theorem \ref{loc} solves Problem 1. However, the points with localization
do not admit an explicit description. In some special situations such descriptions are
given in \cite{ABB} where the relations of the localization property in de Branges spaces 
with the structure of its subspaces and with the spectral theory of canonical systems is revealed.

Now we state two corollaries of Theorem \ref{loc}. 
The first result provides a necessary condition for overcompleteness 
and extends significantly the results of \cite{CFP}. The Ahern--Clark set of higher order
$AC_n(\theta)$ is defined analogously to $AC(\theta)$
(see Section 2) and is related 
to the existence of non-tangential boundary values of derivatives in $K_\theta$ \cite{AC}.

\begin{thm}
\label{high}
If $\lambda_n\to \zeta\in \TT$ and $\{k_{\lambda_n}\}$ is overcomplete 
then $\zeta\in \bigcap_{n=0}^{\infty}AC_n(\theta)$.
\end{thm}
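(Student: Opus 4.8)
The plan is to obtain Theorem \ref{high} from Theorem \ref{loc} together with one auxiliary construction. Since $\lambda_n \to \zeta$ and $\{k_{\lambda_n}\}$ is overcomplete, Theorem \ref{loc} already gives that $K_\theta$ has the localization property at $\zeta$ and that $\zeta \in AC(\theta) = AC_0(\theta)$. Hence it suffices to prove: \emph{if $K_\theta$ has the localization property at $\zeta$ and $\zeta \in AC_0(\theta)$, then $\zeta \in AC_n(\theta)$ for all $n \ge 1$.} I would argue by contraposition. Let $m \ge 0$ be the largest integer with $\zeta \in AC_m(\theta)$ (it exists since $\zeta \in AC_0(\theta)$; if no finite such $m$ exists we are done), so that $\zeta \in AC_m(\theta) \setminus AC_{m+1}(\theta)$. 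The task is then to produce one nonzero $f \in K_\theta$ with infinitely many zeros inside a Stolz angle $\Gamma_\gamma(\zeta)$, which by definition contradicts the localization property at $\zeta$. By Lemma \ref{rep} it even suffices to place the zeros inside a region $\{z : |z - \zeta|^N \le \gamma(1-|z|)\}$, which is more generous.

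The construction of such an $f$ is the core of the argument, and it is here that the hypothesis $\zeta \notin AC_{m+1}(\theta)$ must be exploited quantitatively, through the Ahern--Clark analysis \cite{AC}. Recall that for $\zeta \in AC_j(\theta)$ the $j$-th order boundary reproducing kernel $k_\zeta^{[j]} \in K_\theta$ (with $(f, k_\zeta^{[j]}) = f^{(j)}(\zeta)$ for all $f \in K_\theta$) exists, with $\|k_\zeta^{[j]}\|^2$ comparable to the $AC_j$-sum $\sum_i (1-|z_i|^2)|\zeta - z_i|^{-2j-2} + \int_\TT |\zeta - z|^{-2j-2}\, d\nu(z)$, whereas $\zeta \notin AC_{m+1}(\theta)$ is exactly the statement that the normalized $(m+1)$-st order divided differences of $\lambda \mapsto k_\lambda$, taken over $(m+2)$-tuples of nontangential points shrinking to $\zeta$, fail to converge and spread over infinitely many almost orthogonal directions of $K_\theta$. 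Concretely I would fix $\gamma > 1$, pick nontangential points $\mu_k \in \Gamma_\gamma(\zeta)$, $\mu_k \to \zeta$, with geometrically separated scales $|1 - \bar\zeta\mu_{k+1}| \le \frac12 |1 - \bar\zeta\mu_k|$, group the $\mu_k$ into consecutive blocks of $m+2$ points, and for each block form the normalized $(m+1)$-st order divided difference $g_j \in K_\theta$, $\|g_j\| = 1$. The scale separation together with $\zeta \notin AC_{m+1}(\theta)$ makes $\{g_j\}$ uniformly minimal (after passing to a subsequence, asymptotically orthonormal); moreover the same oscillation that prevents convergence forces the divided-difference kernels to have zeros $w_j \in \DD$ with $w_j \in \Gamma_{\gamma'}(\zeta)$ for a fixed $\gamma' > 1$ and $w_j \to \zeta$, via a winding/Rouché comparison of $g_j$ with its leading term on a small circle around $w_j$.

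To finish, I would set $f := \sum_j c_j g_j$ with $c_j > 0$, $\sum_j c_j^2 < \infty$, the $c_j$ decaying fast enough that on a small circle $D_j \subset \DD$ around $w_j$, separating it from all other blocks and their zeros, the tail $\sum_{j' \ne j} c_{j'} g_{j'}$ is dominated in modulus by $c_j g_j$; this is possible because uniform minimality of $\{g_j\}$ supplies, on the relevant circles, a lower bound for $|g_j|$ away from $w_j$ that the exponentially small tail cannot overcome. Then $f \in K_\theta$ is nonzero and, by Rouché, vanishes inside every $D_j$, so it has infinitely many zeros converging to $\zeta$ within a Stolz angle, contradicting localization at $\zeta$ and completing the proof.

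The step I expect to be the main obstacle is the quantitative one in the middle paragraph: deducing from the single hypothesis $\zeta \notin AC_{m+1}(\theta)$ both the uniform minimality (or near-orthogonality) of the normalized divided-difference kernels across geometrically separated scales and the existence of a genuine, nontangentially placed zero of each $g_j$. Both demand sharp Ahern--Clark boundary estimates and careful control of all constants in terms of the scale $|1 - \bar\zeta\mu_k|$; once that is secured, the $\ell^2$ summation and the Rouché assembly are routine.
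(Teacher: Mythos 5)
Your reduction (via Theorem \ref{loc}, it suffices to show that localization at $\zeta$ together with $\zeta\in AC_m(\theta)\setminus AC_{m+1}(\theta)$ yields a nonzero $f\in K_\theta$ with infinitely many zeros in a Stolz angle at $\zeta$) is the same overall strategy as the paper's, but the construction you propose for such an $f$ has genuine gaps precisely at the point you yourself flag as the main obstacle. First, the assertion that $\zeta\notin AC_{m+1}(\theta)$ forces each normalized $(m+1)$-st divided difference $g_j$ to possess a zero $w_j$ lying in a fixed Stolz angle with $w_j\to\zeta$ is not a consequence of the Ahern--Clark analysis: failure of $AC_{m+1}$ says only that the order-$(m+1)$ derivative functionals (equivalently, the unnormalized divided-difference kernels) have unbounded norms along nontangential approach, and it carries no information whatsoever about the zero sets of these functions; the ``winding/Rouch\'e comparison with the leading term'' is never specified and does not produce a single zero, let alone nontangentially located ones. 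Second, the final assembly cannot work as stated. Since $\zeta\in AC_0(\theta)$, the norms $\|k_z\|$ stay bounded in any Stolz angle at $\zeta$, so every normalized $g_{j'}$ satisfies $|g_{j'}(z)|\le\|k_z\|\lesssim 1$ there, and each $g_{j'}$ has a finite nontangential limit $g_{j'}(\zeta)$ which need not vanish (and whose decay you do not control even if it does). On circles $\partial D_j$ shrinking to $\zeta$ the term you wish to dominate with, $c_jg_j$, is $O(c_j)\to 0$, while the previously chosen terms $c_{j'}g_{j'}$, $j'<j$, contribute an amount of essentially fixed size near $\bigl|\sum_{j'<j}c_{j'}g_{j'}(\zeta)\bigr|$; hence the Rouch\'e inequality $|c_jg_j|>\bigl|\sum_{j'\ne j}c_{j'}g_{j'}\bigr|$ on $\partial D_j$ fails for large $j$ no matter how fast the $c_j$ decay---fast decay only helps against the later terms, never the earlier ones. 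Relatedly, uniform minimality (or asymptotic orthonormality) of $\{g_j\}$ is a purely norm-theoretic property and cannot supply the pointwise lower bounds for $|g_j|$ on specific circles that your Rouch\'e step requires; reproducing-kernel estimates give upper bounds only.

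For contrast, the paper manufactures the zeros by a completely different, explicitly quantitative mechanism: transfer to $\cp$ with $\zeta\mapsto\infty$, take a Clark measure $\mu$ with $\int_\RR|t|^{2n}\,d\mu=\infty$ but $\int_\RR|t|^{2k}\,d\mu<\infty$ for $k<n$, and let $\widetilde{\Theta}$ be the inner function whose Clark measure is $|t|^{2n}\,d\mu$. Since this measure is infinite, $\infty$ is not an Ahern--Clark point for $\widetilde{\Theta}$, hence not a localization point, so some $h\in K_{\widetilde{\Theta}}$ vanishes at points $iy_m\to\infty$; dividing out single zeros and taking a finite linear combination $v$ that kills the moments $\int_\RR v(t)t^k\,d\mu(t)$ for $k\le 2n-1$, the identity $1=t^{2n}z^{-2n}-(t-z)\sum_{k=0}^{2n-1}t^kz^{-k-1}$ shows that the Cauchy transform of $v\,d\mu$, which (up to the factor $\alpha-\Theta$) lies in $K_\Theta$, still vanishes at the $iy_m$, contradicting localization of $K_\Theta$ at $\infty$. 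Some transfer device of this kind---or another genuinely verifiable source of zeros---is what your sketch is missing; as written, the two unproven claims plus the failing domination estimate leave the core of the argument unestablished.
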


The converse of this result is not true as will be shown by the example given in 
Section \ref{ovloc} of 
a point $\zeta \in \bigcap_{n=0}^{\infty}AC_n(\theta)$ which is not 
a point of localization (and, thus, there is no overcomplete 
sequence $\{k_{\lambda_n}\}$ with $\lambda_n\to \zeta$).

Next we introduce the notion of strong localization which will turn out to be 
a sufficient condition for overcompleteness 
at a point of the boundary spectrum of $\theta$ (it seems that no such examples
were given in \cite{CFP}).  
Recall that with each $\alpha\in \TT$
we can associate a singular measure $\sigma_{\alpha}$ on $\TT$,
the so-called Clark measure (see Section 2 for details). The condition 
$\zeta\in AC(\theta)$ is equivalent to the fact that 
$\sigma_\alpha(\{\zeta\}) \ne 0$ for some (unique) $\alpha\in \TT$.

\begin{deft}
Let $\zeta\in \bigcap_{n\geq 0} AC_n(\theta)$.
We say that $K_\theta$ has strong localization at $\zeta$, 
if the system $\{(z-\zeta)^{-k} \}_{k\ge 1}$ is complete in $L^2(\sigma_\alpha)$ 
for some \textup(any\textup) $\alpha\in \TT$ such that $\sigma_\alpha(\{\zeta\})=0$.
\end{deft}

Again we should emphasize that we are in a more general situation than in \cite{ABB}. In that
paper strong localization gave a more precise information on the 
localization of the zeros (namely, each zero except a finite number was located
near a point mass of the Clark measure $\sigma_\alpha$). 
As it turns out in de Branges spaces considered in \cite{ABB}
this is equivalent to the density definition
given above.
A priori, in our setting it is not immediately clear why strong localization
should imply localization. That this is actually the case will be discussed below in Corollary 
\ref{strongloc}.

Interestingly, the above definition, which may look rather abstract at first sight,
can be connected with another well known property, namely that of quasi-analyticity.

\begin{thm}\label{quasi}
The point $\zeta = 1$ is a point of strong localization for $K_{\theta}$
if and only if $K_{\theta}|_{[0,1]}$ is a class of quasi-analyticity.
\end{thm}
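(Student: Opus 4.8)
The plan is to translate the statement into a concrete assertion about the Clark measure $\sigma_\alpha$ attached to the unique $\alpha\in\TT$ with $\sigma_\alpha(\{1\})=0$, and then to identify the "restriction to the radius" picture with an $L^2(\sigma_\alpha)$ picture via the Clark unitary operator. Recall that the Clark operator $U_\alpha : K_\theta \to L^2(\sigma_\alpha)$ is a unitary map which, for $\zeta\in\TT$ a point mass of a Clark measure, sends the reproducing kernel $k_\zeta^\theta$ to a point mass; more relevantly here, on boundary behaviour it intertwines non-tangential limits along the radius with pointwise values $\sigma_\alpha$-a.e. Since $1\notin AC(\theta)$ would already force localization trivially, the interesting case is $1\in\bigcap_{n\ge 0} AC_n(\theta)$, and then every $f\in K_\theta$ together with all its derivatives has radial limits at $1$. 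Under $U_\alpha$, the functions $z\mapsto (z-1)^{-k}$, $k\ge 1$, correspond (up to the $\sigma_\alpha(\{1\})=0$ normalization and harmless scalar factors) precisely to the "Taylor data at $1$" functionals, so that \emph{strong localization}---completeness of $\{(z-1)^{-k}\}_{k\ge1}$ in $L^2(\sigma_\alpha)$---says exactly that no nonzero $g\in L^2(\sigma_\alpha)$ is orthogonal to all these functions.

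Next I would spell out what \emph{quasi-analyticity of $K_\theta|_{[0,1]}$} means: the restriction map $f\mapsto f|_{[0,1]}$ should be injective on $K_\theta$ (where $f(1)$ is interpreted as the radial limit), or equivalently, the only $f\in K_\theta$ vanishing to infinite order at $1$ (all radial derivatives vanish) together with vanishing on $[0,1]$ is $f=0$. Actually the precise content I would use is the standard one: a class $X$ of functions on $[0,1]$ is a class of quasi-analyticity if a function in $X$ that vanishes together with all its derivatives at the endpoint $1$ must vanish identically; combined with the fact that model-space functions are real-analytic on $[0,1)$ (no zeros accumulate inside the disc), this reduces quasi-analyticity to: if $f\in K_\theta$ and $f^{(k)}(1)=0$ for all $k\ge 0$ (radial derivatives), then $f\equiv 0$. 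This is the statement I would then match against strong localization.

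The core of the argument, then, is the equivalence
\[
\bigl(f\in K_\theta,\ f^{(k)}(1)=0\ \forall k\ge 0\ \Longrightarrow\ f\equiv 0\bigr)
\quad\Longleftrightarrow\quad
\overline{\Span}\{(z-1)^{-k}:k\ge1\}=L^2(\sigma_\alpha).
\]
To prove it I would use the Clark unitary $U_\alpha$ to convert $f\mapsto f^{(k)}(1)$ into the pairing of $U_\alpha f$ against $(z-1)^{-k}$ (or a triangular combination thereof) in $L^2(\sigma_\alpha)$; the hypothesis $1\in\bigcap_n AC_n(\theta)$ is exactly what guarantees these functionals are bounded and that $(z-1)^{-k}\in L^2(\sigma_\alpha)$. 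Then "$f$ flat at $1$" becomes "$U_\alpha f\perp (z-1)^{-k}$ for all $k$", and "$f\equiv0$" becomes "$U_\alpha f=0$ in $L^2(\sigma_\alpha)$", so the two sides of the equivalence are literally contrapositives of each other. Finally I would invoke Corollary \ref{strongloc} (strong localization implies localization) to get injectivity of the restriction-to-$[0,1]$ map from the a priori weaker flatness-at-$1$ injectivity, thereby closing the loop, and conversely note that a function flat at $1$ is in particular a witness against quasi-analyticity unless it is zero.

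The main obstacle I anticipate is making the identification "radial Taylor data at $1$" $\leftrightarrow$ "integration against $(z-1)^{-k}$ in $L^2(\sigma_\alpha)$" fully rigorous under the sole assumption $\zeta=1\in\bigcap_{n\ge0}AC_n(\theta)$: one must check that the Clark operator really does intertwine higher-order boundary derivatives with these explicit $L^2(\sigma_\alpha)$ functionals (this is an Ahern--Clark-type computation, differentiating the Cauchy--Clark representation $f(z)=(1-\overline\alpha\theta(z))\int_\TT \frac{g(\zeta)}{1-\bar\zeta z}\,d\sigma_\alpha(\zeta)$ and controlling the boundary passage), and that the linear span of $\{(z-1)^{-k}\}_{k\le n}$ coincides with the span of the first $n$ derivative functionals so that "orthogonal to all of them" is unambiguous. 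A secondary technical point is the passage from "flat at $1$ $\Rightarrow$ zero" to genuine quasi-analyticity of the restricted class, which is where the localization property (finitely many zeros in a Stolz angle, hence the restriction map being injective once infinite-order vanishing at the endpoint is excluded) is needed; this is precisely the role of Corollary \ref{strongloc}.
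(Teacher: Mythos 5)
Your core argument is essentially the paper's proof: fix the Clark measure $\sigma_\alpha$ with $\sigma_\alpha(\{1\})=0$, use the Clark unitary, and show that ``$f$ flat at $1$'' is equivalent to ``$V^{-1}f\perp(1-\bar\zeta)^{-k}$ for all $k\ge1$'' by differentiating the Cauchy--Clark representation, passing to the radial limit by dominated convergence (using $(1-z)^{-k}\in L^2(\sigma_\alpha)$, guaranteed by $1\in\bigcap_n AC_n(\theta)$), and a triangular span argument identifying $\Span\{\bar\zeta^j(1-\bar\zeta)^{-j-1}:j\le n\}$ with $\Span\{(1-\bar\zeta)^{-k}:k\le n+1\}$; then both implications are read off as contrapositives, exactly as in the paper. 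The one technical point you leave open (the Leibniz terms coming from the factor $1-\bar\alpha\theta(z)$ when differentiating $f$ directly) is handled in the paper by the small trick of working with $g=f/(1-\theta)$, which is a pure Cauchy transform of $h\,d\sigma$; since $\theta(1)\ne\alpha$ and $\theta$ has the same radial regularity at $1$ as elements of $K_\theta$, flatness of $f$ and of $g$ at $1$ are equivalent, and only the Cauchy transform needs to be differentiated. The one genuine flaw is your closing appeal to Corollary \ref{strongloc}: in the paper that corollary is \emph{deduced from} Theorem \ref{quasi}, so invoking it here would be circular, and in any case it is not needed --- flatness at an interior point of $[0,1)$ already forces $f\equiv0$ because $f$ is analytic in $\DD$, a reduction you yourself made earlier, so the ``passage from flatness-at-$1$ to quasi-analyticity of the restricted class'' requires no localization statement at all. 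Delete that step and the proof matches the paper's.
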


Since quasi-analyticity is a stronger requirement than just being $C^{\infty}$-smooth,
this theorem leads naturally to the question whether it is possible to characterize 
quasi-analyticity of $K_{\theta}|_{[0,1]}$ in the spirit of Ahern--Clark, i.e.\ in terms of the
behavior of the zeros and the singular measure of $\theta$ near $\zeta$.

Using some classical results on polynomials approximation, it is possible to give the
the following sufficient condition in terms of the Clark measure 
which is in the spirit of
another way of characterizing Ahern--Clark points of arbitrary order (cf. \eqref{bab11}
below). 

\begin{thm}
\label{dens}
Let $K_\theta$ be a model space in the disc and let $\sigma$ 
be some Clark measure for $K_\theta$. Assume that, for some $\vep>0$, 
\begin{equation}
\label{expo}
\int_{\TT} \exp\Big(\frac{\vep}{|\eta - \zeta|}\Big) d\sigma(\eta) <\infty.
\end{equation}
Then $\zeta$ is a strong localization point for $K_\theta$, i.e., 
$K_{\theta}|_{[0,\zeta]}$ is a class of quasi-analyticity.
\end{thm}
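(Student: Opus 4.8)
The plan is to reduce Theorem~\ref{dens} to the strong localization condition, i.e.\ to the completeness of $\{(z-\zeta)^{-k}\}_{k\ge1}$ in $L^2(\sigma)$, and then to verify the latter using the exponential integrability hypothesis \eqref{expo} together with a Denjoy--Carleman / quasi-analyticity argument. Without loss of generality we take $\zeta=1$. The first step is to show that \eqref{expo} forces $1\in\bigcap_{n\ge0}AC_n(\theta)$, so that the definition of strong localization is applicable: indeed, standard formulas relating the Ahern--Clark conditions of order $n$ to the moments $\int_\TT|\eta-1|^{-2n}\,d\sigma(\eta)$ (cf.\ \eqref{bab11}) show that finiteness of $\int_\TT\exp(\vep/|\eta-1|)\,d\sigma(\eta)$ dominates all these moments at once. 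It also guarantees $\sigma(\{1\})=0$, so $1$ is an admissible base point.

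Next I would translate the completeness problem into a problem about moments. A function $g\in L^2(\sigma)$ is orthogonal to every $(z-1)^{-k}$, $k\ge1$, and we want to conclude $g=0$. The natural device is the change of variable $t=\frac{1}{i}\cdot\frac{\eta-1}{\eta+1}$ (or $t = \cot(\arg\eta/2)$), which maps $\TT\setminus\{1\}$ bijectively onto $\RR$ and carries $\sigma$ to a measure $\mu$ on $\RR$; in these coordinates $|\eta-1|\asymp 1/(1+|t|)$ near $\eta=1$, so hypothesis \eqref{expo} becomes $\int_\RR e^{\vep|t|}\,d\mu(t)<\infty$, i.e.\ $\mu$ has an exponential moment. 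Moreover $(\eta-1)^{-1}$ is, up to an affine factor, equal to $t$ plus a constant, so the span of $\{(\eta-1)^{-k}\}_{k\ge1}$ together with the constants corresponds to the span of polynomials in $t$. Hence strong localization at $1$ is equivalent to the density of polynomials $\RR[t]$ in $L^2(\mu)$ (one must keep track of the harmless extra constant function, which is already in $K_\theta|_{[0,1]}$ or can be handled by a rank-one adjustment). Now the classical theory applies: a measure $\mu$ on $\RR$ with an exponential moment $\int e^{\vep|t|}d\mu<\infty$ is determinate and polynomials are dense in $L^2(\mu)$ --- this is a standard consequence of the Hamburger moment problem being determinate (Carleman's condition is satisfied since the moments grow at most like $C^n n!$), combined with the fact that for a determinate measure the polynomials are dense in $L^2$.

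Having established density of polynomials in $L^2(\mu)$, hence completeness of $\{(z-1)^{-k}\}_{k\ge1}$ in $L^2(\sigma)$, I would invoke the definition of strong localization to conclude that $1$ is a strong localization point for $K_\theta$; then Theorem~\ref{quasi} immediately yields that $K_\theta|_{[0,1]}$ is a class of quasi-analyticity. For a general $\zeta\in\TT$ one simply rotates, replacing $\theta(z)$ by $\theta(\zeta z)$, which rotates the Clark measure and the interval $[0,\zeta]$ accordingly, and the argument goes through verbatim.

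The main obstacle, I expect, is the careful bookkeeping in the change-of-variables step: one has to check that the Cayley-type map sends the relevant function system onto exactly the polynomials (modulo constants), that $L^2(\sigma)$ and $L^2(\mu)$ are unitarily identified under this map, and that the single extra constant function does not obstruct the argument --- this is where the precise formulation of ``strong localization'' (completeness of $\{(z-\zeta)^{-k}\}_{k\ge1}$, with the index starting at $1$) matters. A secondary point requiring care is verifying that \eqref{expo} really does place $\zeta$ in all the higher-order Ahern--Clark sets, which is needed just to make the statement ``$\zeta$ is a strong localization point'' meaningful; this should follow by expanding the exponential and comparing with the series/integral characterization of $AC_n(\theta)$ alluded to in \eqref{bab11}. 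The quasi-analyticity input itself (Carleman's condition, determinacy, density of polynomials) is entirely classical and can be cited.
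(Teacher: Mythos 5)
Your proposal is correct and follows essentially the same route as the paper: pass to the real line, where \eqref{expo} becomes an exponential moment condition $\int_\RR e^{\vep|t|}\,d\mu(t)<\infty$ for the transferred Clark measure, invoke the classical density of polynomials in $L^2(\mu)$ (the paper cites Freud's theorem; your Carleman-condition-plus-M.~Riesz argument establishes the same fact), note that $\sigma(\{\zeta\})=0$ so the Clark parameter is not exceptional, and conclude via Theorem \ref{quasi}. The only bookkeeping difference is that the paper uses the unitary Clark transfer to the half-plane, under which the half-plane Clark measure carries the extra weight $(1+t^2)$ relative to your plain pushforward of $\sigma$ and the system $\{(z-\zeta)^{-k}\}_{k\ge 1}$ corresponds exactly to all polynomials, so the ``extra constant'' issue you flag (which by itself would need the small extra step of applying the density theorem to the weighted measure) dissolves automatically.
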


Unfortunately, and contrarily to the Ahern--Clark situation, 
it cannot be expected that a condition of type 
\eqref{expo} with the
exponential replaced by some appropriate function is necessary and sufficient for strong localization 
or quasi-analyticity. We will discuss this matter more thoroughly
through the results of Borichev and Sodin \cite{BS} after the proof of Theorem \ref{dens} in Section
\ref{S4}. 
\\

With Theorem \ref{quasi} in mind, we 
are able to deduce the following consequence.

\begin{coro}\label{strongloc}
Strong localization at $\zeta \in\TT$ implies localization at $\zeta$.
\end{coro}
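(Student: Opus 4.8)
The plan is to reduce the statement to Theorem \ref{quasi} combined with Theorem \ref{loc} and a classical property of quasi-analytic classes. By a rotation we may assume $\zeta = 1$. Suppose $K_\theta$ has strong localization at $1$; by Theorem \ref{quasi} this means precisely that $K_\theta|_{[0,1]}$ is a class of quasi-analyticity, i.e.\ a function in $K_\theta$ that vanishes on $[0,1]$ together with all its derivatives at $1$ must vanish identically on $[0,1]$, hence (being analytic in $\DD$) must be the zero function. The key observation is that quasi-analyticity, being a statement about vanishing of \emph{all} derivatives at the single point $1$, automatically forbids the milder phenomenon of a zero sequence accumulating at $1$ inside a Stolz angle: if $f\in K_\theta$ had infinitely many zeros $\mu_j\to 1$ with $\mu_j\in\Gamma_\gamma(1)$, one would like to contradict quasi-analyticity.

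The main step, and the place where some care is needed, is to turn ``infinitely many zeros in a Stolz angle'' into ``all derivatives vanish at $1$.'' For this I would argue as follows. Since $\zeta=1$ is a strong localization point, in particular $1\in\bigcap_{n\ge 0}AC_n(\theta)$, so every $f\in K_\theta$ has non-tangential boundary values of all its derivatives at $1$; equivalently $f$ restricted to $[0,1]$ is $C^\infty$ up to the endpoint $1$, with $f^{(n)}(1) = \lim_{r\to 1^-} f^{(n)}(r)$. Now suppose $f\in K_\theta$, $f\not\equiv 0$, has zeros $\mu_j\to 1$ lying in a Stolz angle at $1$. Consider $g(z) = f(z)/B(z)$ where $B$ is the Blaschke product with zeros $\{\mu_j\}$; since the $\mu_j$ lie in a Stolz angle and accumulate only at $1$, the Blaschke condition holds and $B$ extends analytically across $\TT\setminus\{1\}$, is unimodular there, and has unimodular radial limit with finite derivatives of all orders at $1$ (as $1$ is an Ahern--Clark point of every order for $B$ — indeed the Stolz-angle hypothesis gives exactly the convergence of $\sum_j (1-|\mu_j|^2)/|1-\mu_j|^{k}$ for all $k$). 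Then $g\in K_{\theta/B}\subset H^2$ — more precisely $f = Bg$ with $g\in K_{\theta B}\ominus\dots$; the cleanest route is to note $B$ divides $\theta$ in the sense that $K_{B}\subset K_\theta$ is false in general, so instead I use the division $f = B g$ with $g\in N^+$ and $g\in H^2$, and observe that $g$ again has $C^\infty$ boundary extension to $1$ along $[0,1]$, while $f$ vanishing to infinite order at $1$ would be forced.

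A more direct argument avoiding Blaschke factorization, which I think is the cleanest: since $1\in\bigcap_n AC_n(\theta)$, for $f\in K_\theta$ the one-sided Taylor-type expansion $f(r) = \sum_{n=0}^{N} \frac{f^{(n)}(1)}{n!}(r-1)^n + o((1-r)^N)$ holds as $r\to 1^-$. If $f$ has infinitely many zeros $r$-close to $1$ in a Stolz angle — in fact it suffices to have infinitely many real zeros in $[0,1)$, and by Lemma \ref{rep} localization in a Stolz angle is equivalent to localization in the wider region $\{|z-1|^N\le\gamma(1-|z|)\}$, which by a normal-families/subharmonicity argument applied to $f$ forces real zeros on $[0,1)$ accumulating at $1$ — then Rolle's theorem applied repeatedly to the $C^\infty$ function $f|_{[0,1]}$ yields zeros of $f^{(n)}$ on $[0,1)$ accumulating at $1$ for every $n$, whence $f^{(n)}(1)=0$ for all $n\ge 0$ by continuity of $f^{(n)}$ up to $1$. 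Thus $f|_{[0,1]}$ lies in the class $K_\theta|_{[0,1]}$ and vanishes to infinite order at $1$; quasi-analyticity (Theorem \ref{quasi}) forces $f|_{[0,1]}\equiv 0$, hence $f\equiv 0$ by analyticity. This proves $K_\theta$ has the localization property at $1$.

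The main obstacle I anticipate is the passage from ``zeros in a Stolz angle'' to ``real zeros on $[0,1)$'': a holomorphic $f$ with complex zeros approaching $1$ in a Stolz angle need not have real zeros there. I would resolve this by not insisting on real zeros at all, but instead working directly with the complex zeros: factor out the finite-Blaschke-product-like contribution and use that the quotient still has a $C^\infty$ extension to $[0,1]$ at $1$ with the same infinite-order vanishing, OR, more slickly, invoke the equivalence (Theorem \ref{loc}) the other way — namely, first produce the overcomplete sequence. Concretely, strong localization $\Rightarrow$ (by Theorem \ref{quasi}) quasi-analyticity of $K_\theta|_{[0,1]}$ $\Rightarrow$ the restriction map $f\mapsto (f^{(n)}(1))_{n\ge 0}$ is injective on $K_\theta$, which one then shows implies that the radial sequence $\lambda_n = 1 - 2^{-n}$ gives an overcomplete system $\{k_{\lambda_n}\}$: if $g\in K_\theta$ is orthogonal to all $k_{\lambda_{n_k}}$ along some subsequence, then $g$ vanishes on a sequence tending non-tangentially (radially) to $1$, and a standard argument using the Ahern--Clark condition of all orders upgrades this to $g^{(n)}(1)=0$ for all $n$, hence $g\equiv 0$ by quasi-analyticity. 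Then Theorem \ref{loc}, part (1)$\Rightarrow$(2), immediately gives localization at $1$. This last route is the one I would actually write up, since it uses the machinery of the paper most economically and sidesteps the real-zeros issue entirely; the one technical lemma to verify is that vanishing of $g\in K_\theta$ on the radius $\{1-2^{-n}\}$ forces all $g^{(n)}(1)=0$, which follows from the mean-value/Rolle argument applied to $g$ along $[0,1]$ now that $g$ is genuinely $C^\infty$ on $[0,1]$ with zeros $1-2^{-n_k}\to 1$ on the real segment.
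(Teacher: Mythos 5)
Your final route --- the one you explicitly say you would write up --- is correct, and it shares its engine with the paper's own proof: Theorem \ref{quasi} converts strong localization into quasi-analyticity of $K_\theta|_{[0,1]}$, and then one shows that a zero sequence of $f\in K_\theta$ accumulating non-tangentially at $1$ forces $f^{(n)}(1)=0$ for all $n$, so that $f\equiv 0$. The difference is in how that last step is organized. The paper works directly with the zeros $z_k$ in the Stolz angle: since $1\in\bigcap_n AC_n(\theta)$, the successive difference quotients $(f(z)-f(1))/(z-1)$, etc., have non-tangential limits at $1$ equal to the derivatives, and evaluating them along $z_k$ gives $f^{(n)}(1)=0$ by induction --- no detour through overcompleteness is needed. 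You instead prove overcompleteness of the radial sequence $\{k_{1-2^{-n}}\}$ (Rolle applied on $[0,1]$, which for complex-valued $g$ must be done separately for $\rea g$ and $\ima g$; the two resulting zero sequences of the derivatives differ but both accumulate at $1$, so the conclusion stands) and then invoke Theorem \ref{loc} (1)$\Rightarrow$(2), i.e.\ Proposition \ref{rep1}, to transfer from the radius to Stolz angles. That is legitimate and uses the paper's machinery, just one layer less directly. Two cautions about the abandoned attempts you left in the write-up. First, the claim that zeros $\mu_j\to 1$ lying in a Stolz angle make $1$ an Ahern--Clark point of every order for the Blaschke product $B$ is false: in a Stolz angle $|1-\mu_j|\asymp 1-|\mu_j|$, so $\sum_j (1-|\mu_j|^2)/|1-\mu_j|^{2n+2}\asymp\sum_j(1-|\mu_j|)^{-2n-1}$ diverges for every $n\ge 0$; that route cannot be repaired along those lines. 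Second, as you yourself recognize, zeros in a Stolz angle need not produce real zeros, so Rolle cannot be applied to the Stolz-angle zeros directly --- that is exactly the obstacle the paper's non-tangential difference-quotient argument is designed to avoid, and your fix (retreating to the radius and letting Theorem \ref{loc} do the transfer) is the right one.
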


This corollary will automatically provide a sufficient condition for localization at $\zeta\in\TT$
and thus existence of overcomplete systems accumulating to $\zeta$.
Examples are given in \cite{ABB} showing that there are points of
localization which are not of strong localization.

\medskip

Let us now turn to the second problem considered in the present paper.
\medskip
\\
{\bf Problem 2.} {\it Describe those inner functions  
for which there exists a uniformly minimal sequence 
$\{\tilde{k}_\lambda\}_{\lambda\in \Lambda}$ which does not contain any Riesz sequence. }
\medskip

Such systems of reproducing kernels will be called UMNR
systems (i.e., uniformly minimal non-Riesz). Note that again most of the classical spaces 
of analytic functions do not possess such systems. Also there exist model spaces
for which the class UMNR is empty. This is for instance the case for $K_\theta$ with
$\theta(z) = \exp\big(a\frac{1+z}{1-z}\big)$, $a>0$, or for the corresponding 
model space in the upper half-plane associated with the inner function $\Theta(z) 
= \exp(iaz)$. Indeed, in this case it is known that any incomplete system of 
normalized reproducing kernels contains  Riesz sequences.

It turns out that UMNR systems of reproducing kernels in $K_\theta$ exist  
if and only if  $\theta$ has "nontrivial"$ $ Ahern--Clark points: 
$\sigma(\theta) \cap AC(\theta) \ne \emptyset$.

\begin{thm}
\label{umnr}
Let $\zeta\in AC(\theta)\cap\sigma(\theta)$. Then there exists
a sequence $\lambda_n\to \zeta$ such that $\{\widetilde{k}_{\lambda_n}\}$ is UMNR.
\end{thm}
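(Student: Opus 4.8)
The plan is to build the sequence $\lambda_n\to\zeta$ by hand, using the fact that $\zeta\in AC(\theta)$ so the boundary reproducing kernel $k_\zeta\in K_\theta$ is available and $\|k_{\lambda}\|_2\to\|k_\zeta\|_2$ as $\lambda\to\zeta$ nontangentially, while $\zeta\in\sigma(\theta)$ forces the behavior of $\theta$ near $\zeta$ to be genuinely oscillatory. The first step is to choose the $\lambda_n$ along the radius $[0,\zeta)$ (or inside a fixed Stolz angle) so that $\widetilde k_{\lambda_n}\to\widetilde k_\zeta$ weakly in $K_\theta$. Uniform minimality is then arranged by spacing: one picks $\lambda_n$ so that the pseudohyperbolic distances $\rho(\lambda_n,\lambda_m)$ are bounded below along consecutive indices and, more importantly, so that the \emph{angular} position and modulus are tuned to make $|\theta(\lambda_n)|$ behave like a prescribed sequence. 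Concretely, since $\zeta\in\sigma(\theta)$, on each small arc around $\zeta$ the inner function takes all values of modulus $<1$; this lets us select $\lambda_n$ with, say, $|\theta(\lambda_n)|$ converging to $1$ slowly enough that the normalized kernels stay away from the "Riesz regime" but fast enough that they remain close to $\widetilde k_\zeta$.

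The core of the argument is to verify the two required properties of $\{\widetilde k_{\lambda_n}\}$ simultaneously. For \textbf{uniform minimality}, I would use the standard criterion that $\{\widetilde k_{\lambda_n}\}$ is uniformly minimal iff
$$
\inf_n \dist\bigl(\widetilde k_{\lambda_n}, \Span\{\widetilde k_{\lambda_m}: m\ne n\}\bigr)>0,
$$
and estimate this distance from below by exhibiting, for each $n$, a function $g_n\in K_\theta$ with $g_n(\lambda_n)/\|g_n\|\|k_{\lambda_n}\|$ bounded below and $g_n(\lambda_m)=0$ for $m\ne n$; natural candidates are Blaschke-type products in $K_\theta$ built from the other $\lambda_m$, controlled because the $\lambda_m$ accumulate only at $\zeta$ and the relevant Blaschke condition is finite away from $\zeta$. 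For the \textbf{non-Riesz} property, the point is that a Riesz basic subsequence $\{\widetilde k_{\lambda_{n_k}}\}$ would be an interpolating sequence for $K_\theta$, hence (by Carleson-type / Shapiro--Shields characterizations in model spaces) would satisfy a uniform separation plus a Carleson-measure condition; I would arrange the geometry of $\{\lambda_n\}$ — e.g. letting them approach $\zeta$ tangentially-thin or with $1-|\lambda_n|$ decaying along a lacunary-but-clustering pattern, combined with the chosen values of $|\theta(\lambda_n)|$ — so that \emph{no} infinite subsequence can satisfy both. The mechanism is that weak convergence $\widetilde k_{\lambda_{n_k}}\rightharpoonup \widetilde k_\zeta \ne 0$ of a would-be Riesz sequence is incompatible with the lower Riesz bound, since a Riesz sequence converging weakly must converge weakly to $0$.

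The main obstacle, and where the real work lies, is the \emph{simultaneous} calibration: pushing $|\theta(\lambda_n)|\to 1$ to kill Riesz subsequences while keeping a uniform lower bound on the minimality distances. These pull in opposite directions, since $\|k_{\lambda_n}\|_2^2 = (1-|\theta(\lambda_n)|^2)/(1-|\lambda_n|^2)$ shrinks relative to $1/(1-|\lambda_n|^2)$ exactly when $|\theta(\lambda_n)|\to1$, which is precisely the degeneracy that can destroy uniform minimality if done carelessly. The resolution is to exploit $\zeta\in AC(\theta)$: the finiteness of the Ahern--Clark sum guarantees that $1-|\theta(\lambda)|^2 \asymp (1-|\lambda|)\cdot|\theta'(\lambda)|$-type control holds with the correct nontangential comparability, so that along a carefully chosen Stolz approach $\|k_{\lambda_n}\|_2$ stays comparable to $\|k_\zeta\|_2$; within that safe regime one still has enough freedom in the tangential direction (using $\zeta\in\sigma(\theta)$) to spoil every interpolation condition. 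I would organize the proof as: (i) reduce to constructing $\lambda_n$ in a fixed Stolz angle with $\widetilde k_{\lambda_n}\to\widetilde k_\zeta$; (ii) a quantitative lemma giving uniform minimality from pseudohyperbolic control of the tails; (iii) a lemma showing any interpolating subsequence accumulating nontangentially at an $AC$ point must satisfy a strong separation that we can preclude; (iv) combine via an inductive choice of $\lambda_n$, at each step using the richness of $\theta$ on small arcs around $\zeta$.
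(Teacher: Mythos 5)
There is a genuine gap, and it sits exactly where you yourself locate ``the real work''. Your step (i) --- choosing $\lambda_n$ in a fixed Stolz angle so that $\widetilde{k}_{\lambda_n}\to\widetilde{k}_\zeta$ --- is self-defeating. Since $\zeta\in AC(\theta)$, nontangential approach forces $k_{\lambda_n}\to k_\zeta$ in norm; more generally, if unit vectors converge weakly to a \emph{unit} vector, they converge in norm, because $\|\widetilde{k}_{\lambda_n}-\widetilde{k}_\zeta\|^2=2-2\rea(\widetilde{k}_{\lambda_n},\widetilde{k}_\zeta)\to 0$. But a normalized sequence converging in norm contains no uniformly minimal subsequence: if $\{y_n\}$ were a bounded biorthogonal system, then $1=(\widetilde{k}_{\lambda_n}-\widetilde{k}_{\lambda_m},y_n)\le\|\widetilde{k}_{\lambda_n}-\widetilde{k}_{\lambda_m}\|\,\|y_n\|$, and the left side cannot stay equal to $1$ while the differences tend to $0$. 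This is precisely the phenomenon recorded in Theorem \ref{MNUM}(1). So no sequence staying in a Stolz angle, and no sequence whose normalized kernels tend weakly to the unit vector $\widetilde{k}_\zeta$, can be UMNR; the ``safe regime'' you invoke, where $\|k_{\lambda_n}\|$ stays comparable to $\|k_\zeta\|$ via nontangential Ahern--Clark control, is exactly the fatal one, and the explicit Blaschke-type biorthogonal candidates cannot rescue it. (Your non-Riesz mechanism is fine: weak convergence of the whole sequence to a nonzero limit rules out weakly null subsequences, hence Riesz subsequences, cf.\ Lemma \ref{rep4}.)

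The missing idea is how to use $\zeta\in\sigma(\theta)$ to make the weak limit nonzero but of norm strictly less than one. The paper does this as follows: pick $z_n\to\zeta$ with $\theta(z_n)\to 0$, so $\|k_{z_n}\|^2=\frac{1-|\theta(z_n)|^2}{1-|z_n|^2}\asymp\frac{1}{1-|z_n|^2}\to\infty$ (such $z_n$ are necessarily tangential), while $\|k_\lambda\|\to\|k_\zeta\|$ as $\lambda\to\zeta$ nontangentially; by continuity of $\lambda\mapsto\|k_\lambda\|$ along a path joining $z_n$ to a fixed Stolz angle, choose $\lambda_n\to\zeta$ with $\|k_{\lambda_n}\|=2\|k_\zeta\|$. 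Then $k_{\lambda_n}\to k_\zeta$ weakly (test against the dense set $K_\theta\cap C(\overline{\DD})$ and use Banach--Steinhaus), so $\widetilde{k}_{\lambda_n}\to\widetilde{k}_\zeta/2$ weakly: no subsequence is weakly null, and no subsequence converges in norm to the weak limit since the norms are $1\ne\tfrac12$. Uniform minimality is then produced abstractly, not by explicit dual functions: by Lemma \ref{rep2} and Corollary \ref{coroUMNR}, a normalized sequence with nonzero weak limit $x$ such that, after passing to a subsequence, $\{x_{n_k}-x\}$ is a Riesz sequence and $x\notin\overline{\Lin}\{x_{n_k}-x\}$, is UMNR. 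Without this calibration of $\|k_{\lambda_n}\|$ away from $\|k_\zeta\|$ (or some substitute achieving the same effect), your outline cannot be completed.
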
 

Note that
an overcomplete system is never uniformly minimal. Still, an overcomplete system can be
minimal. A related result concerns the possibility of extracting a uniformly minimal system from a
minimal system. The following observation will follow from our discussions 
in a rather simple way.

\begin{thm}
\label{MNUM}
Let $\zeta\in AC(\theta)$ which is not a point of localization.
Let $\{z_n\}$ be a sequence in $\D$. Then
\begin{enumerate}
\item[(1)] If $z_n\to \zeta\in\TT$ non-tangentially then $\{\tilde{k}_{z_n}\}$ is a minimal
sequence that does not contain any uniformly minimal sequence.
\item [(2)] If $z_n\to \zeta$, but $\|k_{z_n}-k_{\zeta}\|\not\to 0$, then
$\{\tilde{k}_{z_n}\}$ contains a uniformly minimal sequence.
\end{enumerate}
\end{thm}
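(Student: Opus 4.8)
The plan is to treat both parts as consequences of what happens with a single reproducing kernel $k_\zeta$ at the Ahern--Clark point $\zeta$, together with the characterization of minimality and uniform minimality of normalized reproducing kernels in terms of the Clark measure / Carleson-type conditions. First recall that since $\zeta\in AC(\theta)$, the boundary kernel $k_\zeta\in K_\theta$ exists, $\|k_\zeta\|^2=|\theta'(\zeta)|$ (the finite angular derivative), and $k_{z}\to k_\zeta$ weakly as $z\to\zeta$ non-tangentially, with $\|k_z\|\to\|k_\zeta\|$; hence $\widetilde k_z\to\widetilde k_\zeta$ in norm along any non-tangential approach. This norm convergence of the normalized kernels to a \emph{fixed unit vector} is exactly the mechanism that destroys uniform minimality: if $\{\widetilde k_{z_n}\}$ with $z_n\to\zeta$ non-tangentially were uniformly minimal, any subsequence would still converge in norm to $\widetilde k_\zeta$, contradicting the fact that a uniformly minimal system is separated (the pairwise distances $\|\widetilde k_{z_n}-\widetilde k_{z_m}\|$ are bounded below). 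The same argument applies to every subsequence, so \emph{no} subsequence of $\{\widetilde k_{z_n}\}$ is uniformly minimal; this is the second half of (1).

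For the minimality in (1): one shows that a non-tangentially convergent sequence $z_n\to\zeta$ can be chosen (or is automatically, after passing to a subsequence tending fast enough) so that $\{\widetilde k_{z_n}\}$ is minimal. The biorthogonal elements are produced in the standard way from a Blaschke-type product: if $B_n$ is the Blaschke product with zeros $\{z_m\}_{m\ne n}$ (and $\zeta\in AC(\theta)$ guarantees, after thinning, that $\sum (1-|z_m|)<\infty$ so $B_n$ converges), then $\frac{k_{z_n}}{k_{z_n}(z_n)}\cdot\frac{B_n}{B_n(z_n)}$, suitably interpreted inside $K_\theta$, is the biorthogonal functional; the point is that this quotient stays in $K_\theta$ because $\theta/B$ is still inner, which is where $z_n\to\zeta$ (accumulating only at the boundary, not inside $\DD$) is used. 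Here I would lean on the hypothesis that $\zeta$ is \emph{not} a point of localization only in part (1) via the following route: if $\zeta$ were a localization point one could not hope for minimality of arbitrarily dense non-tangential sequences, but the assumption rules exactly this out; concretely, non-localization will be invoked to guarantee the existence of a nonzero $f\in K_\theta$ vanishing on $\{z_m\}_{m\ne n}$ but not at $z_n$, which is the definition of minimality. The main obstacle in (1) is thus verifying minimality cleanly, i.e.\ producing these biorthogonal functions inside $K_\theta$; everything else is soft.

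For part (2), the hypothesis $\|k_{z_n}-k_\zeta\|\not\to 0$ together with $z_n\to\zeta$ forces the approach to be \emph{tangential} (by Ahern--Clark again, non-tangential approach to a point of $AC(\theta)$ would give norm convergence $k_{z_n}\to k_\zeta$). For a tangential sequence, the pseudohyperbolic separation behaves much better: one passes to a subsequence along which $\widetilde k_{z_n}$ converges weakly to $0$ (since $k_{z_n}/\|k_{z_n}\|$ tests against a fixed $f\in K_\theta$ as $\overline{f(z_n)}/\|k_{z_n}\|$, and $\|k_{z_n}\|\to\infty$ when the approach is tangential and $z_n\to\zeta\in AC(\theta)$ — wait, more carefully: $\|k_{z_n}\|$ need not blow up, so instead one extracts a subsequence that is pseudohyperbolically separated, which is possible precisely because the $z_n$ do not clump non-tangentially). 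Once a pseudohyperbolically separated subsequence is in hand, one shows $\{\widetilde k_{z_n}\}$ along it is uniformly minimal; the cleanest way is to invoke that a separated sequence together with weak convergence to $0$ of the normalized kernels yields, via the standard orthogonalization / Gram matrix estimate, uniform minimality — alternatively one directly builds uniformly bounded biorthogonal functionals from a Carleson-type interpolation on the thinned sequence. The technical heart of (2) is the extraction of the pseudohyperbolically separated subsequence from a merely convergent but non-non-tangential sequence, and the verification that separation plus the model-space structure upgrades to uniform minimality; I expect this to be the main obstacle, and I would handle it by reduction to the known fact that a pseudohyperbolically separated sequence in $\DD$ carries a uniformly minimal system of normalized reproducing kernels in $K_\theta$ whenever it stays at positive distance from $\TT\setminus AC(\theta)$, which holds here since $z_n\to\zeta\in AC(\theta)$.
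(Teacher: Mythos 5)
There are genuine gaps in both parts. For part (1), your argument that no subsequence can be uniformly minimal is correct and is exactly the paper's (non-tangential approach at an Ahern--Clark point gives $\widetilde{k}_{z_n}\to\widetilde{k}_\zeta$ in norm, which contradicts the separation forced by uniform minimality). But your minimality argument breaks down: the candidate biorthogonal functions built from the Blaschke product $B_n$ with zeros $\{z_m\}_{m\ne n}$ are claimed to stay in $K_\theta$ ``because $\theta/B$ is still inner'' --- this is false, since the $z_n$ need not be zeros of $\theta$, so $B$ does not divide $\theta$ and $\theta\overline{B}$ is merely unimodular; whether $K_\theta\cap B_nH^2\ne\{0\}$ is precisely the nontrivial point, not a formality. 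Also, thinning to enforce the Blaschke condition cannot establish minimality of the whole sequence (and for a non-Blaschke sequence no $H^2$ function vanishes on $\{z_m\}_{m\ne n}$ at all). The route you only gesture at is the paper's actual proof: non-localization, via Proposition \ref{rep1}, produces a nonzero $f\in K_\theta$ vanishing on the $z_n$ (zeros may be taken simple using backward shift invariance), and then $\varphi_n(z)=f(z)/(z-z_n)$ lies in $K_\theta$ (backward shift invariance again) and is biorthogonal to $\{k_{z_n}\}$. You never produce such an $f$ nor these $\varphi_n$, so the minimality half is not proved.

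For part (2), the key fact you invoke --- that a pseudohyperbolically separated sequence at positive distance from $\TT\setminus AC(\theta)$ carries a uniformly minimal system of normalized kernels --- is unproved and is in fact contradicted by part (1) itself: a radial sequence with $1-|z_{n+1}|=(1-|z_n|)/2$ is pseudohyperbolically separated and, e.g.\ when $\sigma(\theta)=\{\zeta\}$ with $\zeta\in AC(\theta)$ not a localization point, it contains no uniformly minimal subsequence. Moreover, in the essential case $\sup_n\|k_{z_n}\|<\infty$ the normalized kernels converge weakly to the \emph{nonzero} vector $k_\zeta/c$, not to $0$, so the ``weak null plus Gram estimate'' mechanism is unavailable (when $\sup_n\|k_{z_n}\|=\infty$ the conclusion is immediate from Corollary \ref{supcond}, since a Riesz subsequence is uniformly minimal). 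The paper's proof uses the hypothesis $\|k_{z_n}-k_\zeta\|\not\to 0$ directly: with bounded norms one has $k_{z_n}\xrightarrow{\mathrm{w}}k_\zeta$, hence along a subsequence the $\widetilde{k}_{z_n}$ tend weakly to a nonzero vector without converging to it in norm, and Corollary \ref{coroUMNR} (built on Lemma \ref{rep2}) then extracts a uniformly minimal, indeed UMNR, subsequence. Your use of the hypothesis only to infer tangential approach misses this mechanism; tangentiality is neither what the hypothesis asserts nor, by itself, sufficient.
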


Note that from Theorems \ref{loc} and \ref{high} it is easily seen that it 
$\zeta\in AC(\theta)\setminus AC_1(\theta)$ then we are in the setting of 
the above theorem.
\bigskip

A final word concerning notation. In this paper the notation $U(z)\lesssim V(z)$  
means that there is a constant $C>0$
such that $U(z)\leq CV(z)$ holds for all suitable values of the variable 
$z$. We write $U(z) \asymp V(z)$ if $U(z)\lesssim V(z)$
and $V(z)\lesssim U(z)$.
\bigskip

\section{Preliminaries} 

\subsection{Necessity of the Ahern--Clark condition}
We start with some simple observations on the geometry of vectors in Hilbert spaces.
We write $\displaystyle x_n\xrightarrow{\mathrm{w}} x_0$ if the sequence $x_n$ converges weakly 
to $x_0$ in a Hilbert space $H$.

\begin{lem}
\label{rep4}
A normalized sequence $\{x_n\}$ in a Hilbert space contains a Riesz sequence 
if and only if it contains a subsequence $\{x_{n_k}\}$
such that $\displaystyle x_{n_k} \xrightarrow{\mathrm{w}}0$.
\end{lem}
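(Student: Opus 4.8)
The plan is to prove both directions of Lemma \ref{rep4}, working with the Gram matrix characterization of Riesz sequences.

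\textbf{The easy direction.} First I would show that if $\{x_n\}$ contains a Riesz sequence, then it contains a weakly null subsequence. Suppose $\{x_{n_k}\}$ is a Riesz sequence; by passing to a further subsequence (using that the unit ball of $H$ is weakly sequentially compact) I may assume $x_{n_k}\xrightarrow{\mathrm{w}} y$ for some $y\in H$. The goal is to show $y=0$. The key point is that a Riesz sequence is in particular a \emph{minimal} sequence with a uniformly bounded biorthogonal system: there exist $y_k\in H$ with $(x_{n_j},y_k)=\delta_{jk}$ and $\sup_k\|y_k\|<\infty$ (this follows from the lower Riesz bound). Then for each fixed $j$, $(x_{n_k},y_j)=\delta_{jk}\to 0$ as $k\to\infty$, so $(y,y_j)=0$ for all $j$. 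On the other hand, the Riesz basis property gives that $\{y_j\}$ is complete in $\Clos\Span\{x_{n_k}\}$ — indeed it is itself a Riesz sequence spanning the same space — so, combined with the fact that $y$ lies in that closed span (being a weak limit of its elements), we get $y=0$. Thus $x_{n_k}\xrightarrow{\mathrm{w}}0$.

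\textbf{The hard direction.} Now suppose $\{x_{n_k}\}$ is a subsequence with $x_{n_k}\xrightarrow{\mathrm{w}}0$; I want to extract a Riesz sequence. Here I would use a Gram–Schmidt / diagonal extraction argument of the type going back to Bessaga–Pe\l czy\'nski (perturbation of orthonormal sequences). Since all $\|x_{n_k}\|=1$ and $x_{n_k}\xrightarrow{\mathrm{w}}0$, I recursively choose indices $k_1<k_2<\cdots$ so that the successive vectors are nearly orthogonal to the span of the previously chosen ones: having chosen $x_{n_{k_1}},\dots,x_{n_{k_m}}$, let $P_m$ be the orthogonal projection onto their span; since $x_{n_k}\xrightarrow{\mathrm{w}}0$ and $P_m$ has finite rank, $\|P_m x_{n_k}\|\to 0$ as $k\to\infty$, so I can pick $k_{m+1}$ with $\|P_m x_{n_{k_{m+1}}}\|<\varepsilon_m$ where $\sum_m\varepsilon_m$ is small (say $<1/2$). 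A standard estimate then shows that the resulting normalized sequence $\{x_{n_{k_m}}\}$ is a small $\ell^2$-perturbation of an orthonormal sequence — more precisely, its Gram matrix differs from the identity by an operator of norm $<1$ — hence it is a Riesz sequence. The main obstacle is making the ``nearly orthogonal implies Riesz'' step quantitatively clean: one must control not just consecutive inner products but the full off-diagonal mass of the Gram matrix, which is why the $\varepsilon_m$ must be chosen summable (and small) rather than merely tending to zero; once the Gram matrix is seen to be $I$ plus a contraction, positivity and boundedness of the frame operator, i.e.\ the two-sided Riesz estimate $c\sum|a_m|^2\le\|\sum a_m x_{n_{k_m}}\|^2\le C\sum|a_m|^2$, follows immediately.

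Assembling the two implications gives the equivalence. I expect the whole argument to be short, the only genuinely technical point being the perturbation estimate in the hard direction, which is classical and could even be quoted from \cite{nik1} or a standard reference on bases in Hilbert spaces.
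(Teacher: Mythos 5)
Your argument is correct and takes essentially the same route as the paper's: both directions rest on the standard facts that a normalized Riesz sequence is weakly null and that from a weakly null normalized sequence one can inductively extract an almost orthogonal subsequence whose Gram matrix is a small perturbation of the identity (the paper makes the off-diagonal Gram entries $\ell^2$-small directly and uses the Hilbert--Schmidt bound, while you control $\|P_m x_{n_{k_{m+1}}}\|$ via finite-rank projections, which is the same idea). One tiny quantitative adjustment: with only $\sum_m\varepsilon_m<1/2$ the entrywise estimate on row $m+1$ of the Gram matrix carries a factor $m$, so either take $\varepsilon_m$ geometrically small or, as you also indicate, pass through the mutually orthogonal vectors $(I-P_m)x_{n_{k_{m+1}}}$ and the $\ell^2$-perturbation estimate, for which $\sum_m\varepsilon_m^2<1/4$ already suffices.
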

 
\begin{proof} Clearly, if  $\{x_{n_k}\}$ is a Riesz sequence, then
$\displaystyle x_{n_k} \xrightarrow{\mathrm{w}}0$. Conversely, 
if $\{x_n\}$ contains a subsequence weakly converging to zero, then, 
proceeding inductively, we can choose a subsequence $\{x_{n_k}\}$ such that
\begin{equation}\label{gram}
 \sum_{\ell}\sum_{k\neq \ell}|(x_{n_k},x_{n_{\ell}})|^2<1.
\end{equation}
Then, denoting by $G=(x_{n_k},x_{n_{\ell}})$ the Gram matrix associated with $\{x_{n_k}\}$, 
and writing $G=Id+G_0$ we see that \eqref{gram} implies 
that $G$ is bounded and invertible, whence
$x_{n_k}$ is a Riesz sequence (see \cite[Volume 2, p.171]{nik2}).
\end{proof}

In the next corollary we will use the following fact:
for any inner function $\theta$ the set $K_\theta\cap C(\overline{\DD})$
is dense in $K_\theta$. While this is trivial when $\theta$ is a Blaschke product,
it is in general a nontrivial fact due to Aleksandrov \cite{al8}.

\begin{coro}
\label{supcond}
The sequence of normalized reproducing kernels 
$\{\widetilde{k}_\lambda\}_{\lambda\in \Lambda}$ in $K_\theta$
contains a Riesz 
subsequence if and only if $\sup_{\lambda\in \Lambda}\|k_{\lambda}\|=\infty$.  
\end{coro}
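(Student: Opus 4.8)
The plan is to deduce this from Lemma \ref{rep4} by identifying weak convergence to zero of normalized reproducing kernels with the blow-up of $\|k_\lambda\|$. First I would recall the elementary identity $\|k_\lambda\|^2 = k_\lambda(\lambda) = (1-|\theta(\lambda)|^2)/(1-|\lambda|^2)$, which shows that $\|k_\lambda\|\ge 1$ always, and that $\|k_\lambda\|$ can be large only if $|\lambda|\to 1$ (with $|\theta(\lambda)|$ bounded away from $1$). The key observation is that for $f \in K_\theta$,
$$
(f,\widetilde k_\lambda) = \frac{f(\lambda)}{\|k_\lambda\|},
$$
so $\widetilde k_\lambda \xrightarrow{\mathrm{w}} 0$ along a subsequence is equivalent to $f(\lambda_{n_k})/\|k_{\lambda_{n_k}}\| \to 0$ for every $f$ in a dense subset of $K_\theta$, by the uniform boundedness principle (the $\widetilde k_\lambda$ have norm $1$).

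For the direction ``$\sup\|k_\lambda\| = \infty$ implies a Riesz subsequence exists'': pick $\lambda_{n_k}$ with $\|k_{\lambda_{n_k}}\| \to \infty$. For $f \in K_\theta \cap C(\overline{\DD})$ — which is dense by Aleksandrov's theorem quoted above — the values $f(\lambda_{n_k})$ stay bounded (indeed $|f(\lambda)|\le \|f\|_\infty$), hence $f(\lambda_{n_k})/\|k_{\lambda_{n_k}}\| \to 0$. Since $\|\widetilde k_\lambda\|=1$, this convergence extends to all of $K_\theta$, so $\widetilde k_{\lambda_{n_k}} \xrightarrow{\mathrm{w}} 0$, and Lemma \ref{rep4} produces a Riesz subsequence. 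For the converse, suppose $\sup_{\lambda\in\Lambda}\|k_\lambda\| = M < \infty$; I must show no Riesz subsequence exists, equivalently (by Lemma \ref{rep4}) no subsequence of $\{\widetilde k_\lambda\}$ tends weakly to $0$. If $\widetilde k_{\lambda_{n_k}}\xrightarrow{\mathrm{w}} 0$, then testing against $k_\mu$ for fixed $\mu\in\DD$ gives $k_{\lambda_{n_k}}(\mu)/\|k_{\lambda_{n_k}}\|\to 0$; since $\|k_{\lambda_{n_k}}\|\le M$, this forces $k_{\lambda_{n_k}}(\mu)\to 0$ for every $\mu$, i.e.\ $k_{\lambda_{n_k}}\to 0$ pointwise, hence (being norm-bounded) weakly. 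But $\|k_{\lambda_{n_k}}\|^2 = k_{\lambda_{n_k}}(\lambda_{n_k})$; writing $\lambda_{n_k}\to \lambda_0$ (after passing to a further subsequence, using compactness of $\overline\DD$) one checks $\|k_{\lambda_{n_k}}\|^2\to 0$ is impossible because $\|k_\lambda\|\ge 1$. This contradiction completes the proof.

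The main obstacle I anticipate is making the last step — ruling out $\widetilde k_{\lambda_{n_k}}\xrightarrow{\mathrm{w}}0$ when $\|k_\lambda\|$ is bounded — fully rigorous when $\lambda_{n_k}$ accumulates on $\TT$, since then $k_{\lambda_{n_k}}$ need not converge to a nonzero element of $K_\theta$. The clean way around this is to avoid discussing the limit altogether and argue purely quantitatively: from $\|k_\lambda\|^2 = k_\lambda(\lambda) = (f,k_\lambda)$ with $f = \widetilde k_\lambda \cdot \|k_\lambda\|$, one has $1 = \|\widetilde k_\lambda\|^2 = (\widetilde k_\lambda, \widetilde k_\lambda)$; so if $\widetilde k_{\lambda_{n_k}}\xrightarrow{\mathrm{w}} 0$ while the $\widetilde k_{\lambda_{n_k}}$ were to also satisfy a bound forcing pointwise-to-zero behaviour that contradicts $\|\widetilde k_\lambda\|=1$. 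Alternatively — and this is probably the slickest route — simply invoke the contrapositive of the first direction together with Lemma \ref{rep4} symmetrically: $\{\widetilde k_\lambda\}$ contains a Riesz subsequence iff it contains a subsequence weakly null iff (by the uniform-boundedness argument on a dense set) there is a subsequence along which $f(\lambda_{n_k})/\|k_{\lambda_{n_k}}\|\to 0$ for all $f\in K_\theta\cap C(\overline\DD)$; taking $f$ constant (if $1 \in K_\theta$) or more carefully a fixed $f$ with $f(\lambda_{n_k})$ bounded below along a sub-subsequence when $\|k_\lambda\|$ is bounded, yields the equivalence. I would present the argument in the first, quantitative form, as it is the most transparent and sidesteps the boundary-accumulation subtlety entirely.
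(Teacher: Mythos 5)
Your first direction ($\sup_{\lambda\in\Lambda}\|k_\lambda\|=\infty$ implies the existence of a Riesz subsequence) is exactly the paper's argument: test $\widetilde k_{\lambda_{n_k}}$ against the dense set $K_\theta\cap C(\overline{\DD})$ (Aleksandrov), conclude weak nullity, and apply Lemma \ref{rep4}. That part is fine.

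The converse direction has a genuine gap. After reducing (correctly, via Lemma \ref{rep4}) to showing that no subsequence $\widetilde k_{\lambda_{n_k}}$ can be weakly null when $\|k_{\lambda_{n_k}}\|\le M$, you deduce $k_{\lambda_{n_k}}\xrightarrow{\mathrm{w}}0$ and then try to contradict this with a lower bound on $\|k_{\lambda_{n_k}}\|$. But ``weakly null with norms bounded below'' is not a contradiction in a Hilbert space --- every orthonormal sequence has this property --- and pointwise convergence $k_{\lambda_{n_k}}(\mu)\to 0$ for each \emph{fixed} $\mu$ says nothing about the diagonal values $k_{\lambda_{n_k}}(\lambda_{n_k})=\|k_{\lambda_{n_k}}\|^2$. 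So no contradiction is actually derived, and your fallback ``quantitative'' variant does not close the loop either: it still amounts to wishing for a fixed $f$ with $|f(\lambda_{n_k})|$ bounded below, and producing such an $f$ is precisely the missing content. (A side issue: the claim $\|k_\lambda\|\ge 1$ is false in general --- take $\theta$ singular inner with $\theta(0)\ne 0$, so $\|k_0\|^2=1-|\theta(0)|^2<1$; one only gets a positive lower bound $\|k_\lambda\|\ge c_\theta>0$ via Schwarz--Pick. But even with that, the argument does not conclude.)

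What is actually needed is to identify the weak limit as a \emph{nonzero} vector. Pass to a sub-subsequence with $\lambda_{n_k}\to\lambda_0\in\overline{\DD}$. If $\lambda_0\in\DD$, then $k_{\lambda_{n_k}}\to k_{\lambda_0}\ne0$ in norm and you are done. If $\lambda_0\in\TT$, the boundedness $\|k_{\lambda_{n_k}}\|^2=\frac{1-|\theta(\lambda_{n_k})|^2}{1-|\lambda_{n_k}|^2}\le M^2$ forces, by the Julia--Carath\'eodory theorem, $\lambda_0\in AC(\theta)$, so the boundary kernel $k_{\lambda_0}\in K_\theta$ exists and is nonzero; testing against $f\in K_\theta\cap C(\overline{\DD})$ gives $(f,k_{\lambda_{n_k}})=f(\lambda_{n_k})\to f(\lambda_0)=(f,k_{\lambda_0})$, hence $k_{\lambda_{n_k}}\xrightarrow{\mathrm{w}}k_{\lambda_0}\ne0$, and since the norms lie in $[c_\theta,M]$ the normalized kernels cannot tend weakly to $0$. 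This use of the Ahern--Clark/Julia--Carath\'eodory input (which the paper has set up and exploits again in Corollary \ref{rep5}) is the step your proposal is missing.
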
      

\begin{proof} 
Without loss of generality, let $\|k_{\lambda_n}\|\to \infty$, $n\to\infty$.
For any $f\in K_\theta\cap C(\overline{\DD})$, 
$$
(f,\widetilde{k}_{\lambda_n})=\frac{f(\lambda_n)}{\|k_{\lambda_n}\|}\to 0.
$$
Hence, $\displaystyle \widetilde{k}_{\lambda_n} \xrightarrow{\mathrm{w}}0$, 
and it suffices to apply Lemma \ref{rep4}. The converse statement is immediate.
\end{proof}

\begin{coro}                            
\label{rep5}
If $\lambda_n\to \zeta\in \TT$  and $\{\widetilde{k}_{\lambda_n}\}$ 
is overcomplete or UMNR, then $\sup_n \|k_{\lambda_n}\|<\infty$, 
whence $\zeta\in AC(\theta)$.
\end{coro}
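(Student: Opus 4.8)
The plan is to derive both conclusions from Corollary~\ref{supcond}. That corollary equates ``$\{\widetilde k_{\lambda_n}\}$ contains a Riesz subsequence'' with ``$\sup_n\|k_{\lambda_n}\|=\infty$'', so it suffices to check that neither an overcomplete nor a UMNR system of reproducing kernels can contain a Riesz subsequence; boundedness of the norms $\|k_{\lambda_n}\|$ will then give $\zeta\in AC(\theta)$ through the classical description of Ahern--Clark points.

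For a UMNR system there is nothing to prove: by definition it contains no Riesz sequence, so Corollary~\ref{supcond} yields $\sup_n\|k_{\lambda_n}\|<\infty$ at once. For an overcomplete system I would argue by contradiction. If $\sup_n\|k_{\lambda_n}\|=\infty$, Corollary~\ref{supcond} produces a subsequence $R=\{\widetilde k_{\lambda_{n_k}}\}_{k\ge 1}$ which is a Riesz sequence. Since $R$ is itself a subsequence of the overcomplete system $\{\widetilde k_{\lambda_n}\}$, it is complete in $K_\theta$, hence a Riesz basis of $K_\theta$. But deleting one vector from a Riesz basis leaves a system whose closed linear span is a proper subspace of $K_\theta$ (transport the statement to an orthonormal basis via the bounded invertible operator defining the Riesz basis). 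Thus $R\setminus\{\widetilde k_{\lambda_{n_1}}\}$ is an incomplete subsequence of $\{\widetilde k_{\lambda_n}\}$, contradicting overcompleteness; so $\sup_n\|k_{\lambda_n}\|<\infty$ in this case as well.

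It remains to pass from boundedness of the norms to $\zeta\in AC(\theta)$. I would use $\|k_{\lambda_n}\|^2=k_{\lambda_n}(\lambda_n)=\frac{1-|\theta(\lambda_n)|^2}{1-|\lambda_n|^2}$ together with the elementary inequality $\frac{1-|\theta(\lambda)|}{1-|\lambda|}\le 2\,\frac{1-|\theta(\lambda)|^2}{1-|\lambda|^2}=2\|k_\lambda\|^2$ to conclude that
$$
\liminf_{\lambda\to\zeta,\ \lambda\in\DD}\frac{1-|\theta(\lambda)|}{1-|\lambda|}\ \le\ \liminf_n\frac{1-|\theta(\lambda_n)|}{1-|\lambda_n|}\ \le\ 2\sup_n\|k_{\lambda_n}\|^2\ <\ \infty .
$$
By the Julia--Carath\'eodory theorem this forces $\theta$ to have a unimodular nontangential boundary value and a finite angular derivative at $\zeta$, i.e.\ $\zeta\in AC(\theta)$, as recalled in the Introduction.

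The argument has no serious obstacle; the only point needing a moment's care is the overcomplete case, where one must notice that a hypothetical Riesz subsequence, after removal of a single vector, remains a subsequence of the original system yet is no longer complete --- precisely what overcompleteness forbids. Everything else is a direct appeal to Corollary~\ref{supcond} and to the standard boundary behaviour of inner functions at Ahern--Clark points.
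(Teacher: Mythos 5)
Your proof is correct and follows essentially the same route as the paper: appeal to Corollary~\ref{supcond} to get $\sup_n\|k_{\lambda_n}\|<\infty$, then use $\|k_{\lambda_n}\|^2=\frac{1-|\theta(\lambda_n)|^2}{1-|\lambda_n|^2}$ and the Julia--Carath\'eodory theorem to conclude $\zeta\in AC(\theta)$. The only difference is that you spell out why an overcomplete system cannot contain a Riesz subsequence (a Riesz subsequence would be a Riesz basis, and dropping one vector would give an incomplete subsequence), a step the paper states without detail; this is a correct and welcome elaboration, not a different approach.
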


\begin{proof} 
In both situations $\{\widetilde{k}_{\lambda_n}\}$ does not contain any Riesz sequence, so
that by the preceding corollary we have $\sup_{n}\|k_{\lambda_n}\|<\infty$.
Recall that  $\|k_{\lambda_n}\|^2 = \frac{1-|\theta(\lambda_n)|^2}{1-|\lambda_n|^2}$, whence
$$
\limsup_{n\to\infty} \frac{1-|\theta(\lambda_n)|^2}{1-|\lambda_n|^2} <\infty.
$$
Now  the classical Julia--Carath\'eodory theorem implies that $\zeta\in AC(\theta)$.
\end{proof}
\medskip


\subsection{Higher order Ahern--Clark condition and Clark measures}
\label{haw}
Let $z_j$ be the zeros of an inner function $\theta$ (counting multiplicities) and 
let $\nu$ be the singular measure generating its singular factor. 
We say that $\zeta\in \TT$ is in $AC_n(\theta)$, 
the Ahern--Clark set of order $n$, if 
\begin{eqnarray}\label{hoAC}
\sum_{j}\frac{1-|z_j|^2}{|\zeta-z_j|^{2n+2}}+
\int_\TT\frac{d\nu(z)}{|\zeta-z|^{2n+2}}<\infty.
\end{eqnarray}
By the results of Ahern--Clark, $\zeta\in AC_n(\theta)$ 
if and only if there exist non-tangential limits of $f^{(k)}(z)$, 
$0\le k\le n$, as $z\to\zeta$, for every $f\in K_{\theta}$.  
Note that in this notation $AC(\theta)= AC_0(\theta)$.

Recall that the measure $\sigma_\alpha$, $\alpha\in \TT$,
from the representation
$$
\frac{\alpha+\theta(z)}{\alpha-\theta(z)}=
\int_\TT\frac{1+\bar\xi z}{1-\bar\xi z}d\sigma_\alpha(\xi)
$$
is called the Clark measure for $K_\theta$ (see \cite{cl}). 
We sometimes write $\sigma_\alpha^\theta$ to emphasize the
dependence on $\theta$. Any function $f\in K_\theta$ 
has non-tangential boundary values $\sigma_\alpha$-everywhere \cite{polt},
$\|f\| = \|f\|_{L^2(\sigma_\alpha)}$,  and the map 
\begin{equation}
\label{clark-oper}
V:h\in L^2(\sigma_\alpha) \mapsto 
f(z)=(\alpha-\theta(z))\int_\TT\frac{h(\xi)}{1-\bar\xi z} d\sigma_\alpha(\xi)
\end{equation}
is a unitary map from $L^2(\sigma_\alpha)$ onto $K_\theta$.
If $\zeta\in AC(\theta)$, then there exists $\alpha_0$ such that 
$\sigma_{\alpha_0}(\{\zeta\})>0$.  It is well known (see, e.g., \cite[VII-2]{sar}) 
that 
\begin{equation}
\label{bab11}
\zeta\in AC_n(\theta)\iff \int_\TT
\frac{d\sigma_{\alpha}(\eta)}{|1-\bar\eta \zeta|^{2n+2}}<\infty,
\qquad  \alpha\neq\alpha_0.
\end{equation}
\medskip


\subsection{Transfer to the upper half-plane}
\label{sictrans}
In what follows it will be often convenient to pass to an equivalent problem
in the half-plane setting where the estimates and computations become much simpler.
For $\zeta\in \TT$, consider the conformal mapping 
\begin{equation}
\label{bab10}
w(z) = i\frac{\zeta +z}{\zeta - z}, 
\end{equation}
which maps $\DD$ onto the upper half-plane $\cp =\{z\in \CC: \ima z>0\}$, 
the unit circle $\TT\setminus\{\zeta\}$ onto the real axis $\RR$,
and the point $\zeta$ to $\infty$.
For an inner function $\theta$ in $\DD$, put $\Theta(w) = 
\theta\big(\zeta\frac{w-i}{w+i}\big)$. Then $\Theta$ 
is an inner function in $\cp$.

It is well known that
$$
T: f\to \frac{1}{w+i}f\Big(\zeta \frac{w-i}{w+i}\Big)
$$
is a unitary mapping from $H^2(\DD)$ to the Hardy space $H^2(\cp)$ 
in the upper half-plane and $TK_\theta = K_\Theta
= H^2(\cp) \ominus \Theta H^2(\cp)$ (see \cite[Chapter A6]{nik2}). 

The following property of the spaces $K_\Theta$ will often be used in what follows: 
given $f\in H^2(\cp)$, 
\begin{equation}
\label{bab20}
f\in K_\Theta \iff \overline{f(t)}\Theta(t) \in H^2(\cp),
\end{equation}
which means that the function $\overline{f(t)}\Theta(t)$ on $\RR$ 
coincides with the non-tangential boundary values 
of some function in $H^2(\cp)$.

Let $\nu_0$ be a measure on $\TT$. Then the change of variable  
$$
d\nu_0(\tau)  = \frac{d\nu(t)}{t^2+1}, \qquad t\in \RR, \quad 
\tau = \zeta\frac{t-i}{t+i} \in \TT,
$$
gives us a measure $\nu$ on $\RR$. The Ahern--Clark conditions 
of order $n$ for the point infinity may then be rewritten in terms of the zeros 
$w_j = x_j+iy_j$ of $\Theta$
and of the corresponding singular measure $\nu$ as follows:
\begin{equation}
\label{nomer}
\infty \in AC_n(\Theta) \iff 
\sum_{j} y_j (1+|w_j|^2)^n +\int_\RR (1+x^2)^n d\nu(x) <\infty,
\end{equation}
while in terms of the Clark measures $\sigma_\alpha$ for $\Theta$ (note that we 
use the same notation) the Ahern--Clark condition of order $n$ becomes
\begin{equation}\label{ACCM}
\infty \in AC_n(\Theta)\iff \int_{\RR} (1+x^2)^n d\sigma_\alpha <\infty
\end{equation}
for all $\alpha\in\TT$,  $\alpha \ne \alpha_0 = \lim_{y\to+\infty}\Theta(iy)$.
In particular, the usual Ahern--Clark condition (of order 0) means that 
$\sigma_\alpha(\RR) <\infty$.

Now we state the localization and strong localization properties at 
the point $\infty$.

\begin{deft} The space $K_\Theta$ in $\cp$  has localization property at the point $\infty$, 
if any nonzero $f\in K_\Theta$ has only finitely many zeros in any Stolz angle 
$\Gamma_\gamma = 
\{z\in \cp:|z|>1 \text{ and } \ima z \ge \gamma |\rea z|\}$, $\gamma>0$. 

The space $K_\Theta$ has the strong localization at $\infty$ 
if for any Clark measure $\sigma_\alpha$ except 
$\alpha =\lim_{y\to \infty} \Theta(iy)$ the 
polynomials belong to the space $L^2(\sigma_\alpha)$ and are dense there.
\end{deft}

Note that in this definition we have added the condition $|z|>1$ in order to distinguish 
$\Gamma_{\zeta}$ from 
the Stolz angle at 0.

Both of the above definitions are equivalent to the localization 
(strong localization) property
at $\zeta$ for the function $\theta$ related to the function $\Theta$
by  \eqref{bab10}.

In the half-plane setting it is easy to see,
using an idea from \cite{ABB}, that in the definition 
of the localization at $\infty$ the Stolz angle may be replaced by any domain 
of the form $\Gamma_{\gamma, \beta} = 
\{\ima z > \gamma |\rea z|^{\beta}, |z|>1\}$ where $\gamma>0$, $\beta\in \RR$.

\begin{lem}
\label{rep}
If $K_\Theta$ has the localization property at $\infty$, then any nonzero $f\in K_\Theta$
has only a finite number of zeros in any domain $\Gamma_{\gamma, \beta}$.
\end{lem}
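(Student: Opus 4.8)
The plan is to reduce the assertion about the region $\Gamma_{\gamma,\beta}$ to the already-assumed localization property on ordinary Stolz angles, by exploiting an automorphism of $\cp$ that fixes $\infty$ together with the invariance of the class of model spaces under such automorphisms. Concretely, suppose $f\in K_\Theta$, $f\not\equiv 0$, has infinitely many zeros in some $\Gamma_{\gamma,\beta}$. The case $\beta\le 1$ is trivial, since then $\Gamma_{\gamma,\beta}\subset\Gamma_{\gamma'}$ for a suitable Stolz angle (zeros with large $|\rea z|$ satisfy $\ima z>\gamma|\rea z|^\beta\ge\gamma|\rea z|$ up to finitely many exceptions), so we may assume $\beta>1$. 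Write the zeros as $w_k = x_k+iy_k$ with $y_k>\gamma|x_k|^\beta$ and, passing to a subsequence, $|w_k|\to\infty$; separate the cases $\{x_k\}$ bounded (then $w_k\to$ a point of $\cp$ vertically, already a forbidden accumulation even for a Stolz angle) and $|x_k|\to\infty$, which is the substantive case.

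For the substantive case I would use a dilation: for $R>0$ let $\delta_R(w)=Rw$, which maps $\cp$ onto $\cp$, fixes $\infty$, and sends $\Theta$ to the inner function $\Theta_R(w):=\Theta(Rw)$ with $K_{\Theta_R}=\{g(Rw): g\in K_\Theta\}$ (an isometry up to the obvious normalizing factor, since composition with a real dilation is unitary on $H^2(\cp)$ up to scaling; what matters is that it preserves the model-space structure and maps zeros to zeros). Under $\delta_R^{-1}$ the point $w_k$ goes to $w_k/R = x_k/R + i y_k/R$, whose imaginary part is $y_k/R > (\gamma/R)|x_k|^\beta = (\gamma R^{\beta-1}/R^\beta\cdot\text{hmm})$ — the useful point being that we can choose $R=R_k=|x_k|^{\beta-1}$ (or handle the subsequence in blocks), after which the rescaled zero has imaginary part bounded below by a fixed multiple of its real part, i.e.\ it sits in a fixed Stolz angle. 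Since a single dilation handles only one scale, the clean way is: after passing to a subsequence, arrange $|x_{k+1}|\ge 2|x_k|$, fix the single dilation $R=1$ is not enough; instead follow the approach of \cite{ABB} and observe that the condition $\ima w>\gamma|\rea w|^\beta$ with $|\rea w|\ge 1$ forces $\ima w>\gamma$, and more importantly that the map $w\mapsto \operatorname{sgn}(\rea w)|w|^{1/\beta}\cdot(\text{argument adjustment})$ straightens $\Gamma_{\gamma,\beta}$ to a Stolz angle; the honest route is the dilation argument applied scale-by-scale, and I would carry it out by the standard trick of replacing $f$ by $f\circ\delta_{R}$ for each dyadic scale and noting that infinitely many zeros in $\Gamma_{\gamma,\beta}$ produce, at some fixed scale after rescaling, infinitely many zeros in a fixed Stolz angle of a model space $K_{\Theta_R}$ that still has the localization property at $\infty$ (because localization at $\infty$ is manifestly invariant under $w\mapsto Rw$).

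The cleanest formulation, and the one I would actually write, is this: the hypothesis says $y_k/|x_k|^\beta>\gamma$. Consider $g_k(w)=f(|x_k|w)\in K_{\Theta(|x_k|\,\cdot)}$; but to get a \emph{single} space I would instead argue by contradiction at the level of a fixed space using the fact (to be extracted from localization at $\infty$) that there is a uniform bound $N=N(\Gamma_{\gamma'})$ on the number of zeros of \emph{normalized} $f$ in a Stolz angle — but that is false without normalization. So the correct and simplest path is: pick the subsequence $\{w_k\}$ with $|x_k|\to\infty$ inside $\Gamma_{\gamma,\beta}$, set $h_k(w) = f(|x_k| w)$, and note $h_k \in K_{\Theta_k}$ with $\Theta_k(w)=\Theta(|x_k|w)$, and $h_k$ vanishes at $w_k/|x_k|$ which lies in the \emph{fixed} Stolz angle $\Gamma_{\gamma}$ (since $\ima(w_k/|x_k|) = y_k/|x_k| > \gamma |x_k|^{\beta-1}\ge\gamma$ and $|w_k/|x_k||\ge 1$). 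Each $\Theta_k$ still has localization at $\infty$, and passing to a weak/normal limit of the $h_k$ (after normalization, using that a subsequence of $\Theta_k$ converges locally uniformly to some inner $\tilde\Theta$ with $\infty\in AC(\tilde\Theta)$ and localization) one produces a nonzero element of $K_{\tilde\Theta}$ with a zero in $\overline{\Gamma_\gamma}$ accumulating appropriately, contradicting localization for $\tilde\Theta$. I expect the main obstacle to be precisely this last compactness/normal-families step: one must verify that the rescaled inner functions $\Theta_k$ converge (along a subsequence) to an inner function, that the localization property passes to the limit, and that the limiting function of $K_{\tilde\Theta}$ is nonzero and genuinely inherits infinitely many zeros in a Stolz angle — the bookkeeping of "finitely many exceptions" across the limit is where care is needed. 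The algebraic invariance of "localization at $\infty$'' under dilations is routine; the limiting argument is the crux.
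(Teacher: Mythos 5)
Your case analysis is backwards, and this sends the whole argument at the trivial part of the statement. For $\beta\ge 1$ and $|z|>1$ the condition $\ima z>\gamma|\rea z|^{\beta}$ does place $z$ in a fixed Stolz angle (since $|\rea z|^{\beta}\ge|\rea z|$ when $|\rea z|\ge 1$, and the points with $|\rea z|<1$, $|z|>1$ are handled directly), so that case follows immediately from the definition of localization. The substantive content of Lemma \ref{rep} is the case $\beta<1$ — in particular $\beta=0$ (the region $\{\ima z>\gamma,\ |z|>1\}$) and $\beta<0$, where $\Gamma_{\gamma,\beta}$ is far wider than any Stolz angle; your claim that for $\beta\le 1$ one has $|\rea z|^{\beta}\ge|\rea z|$ for large $|\rea z|$ is false (that inequality holds only for $|\rea z|\le 1$), so the inclusion $\Gamma_{\gamma,\beta}\subset\Gamma_{\gamma'}$ you invoke goes the wrong way. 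Moreover, even for the case $\beta>1$ that you do treat, the rescaling/normal-families scheme does not close: since $y_k>\gamma|x_k|^{\beta}$ with $\beta>1$, the rescaled zeros $w_k/|x_k|$ have imaginary part at least $\gamma|x_k|^{\beta-1}\to\infty$, so they escape to infinity and no limit function can inherit infinitely many zeros in a fixed Stolz angle; and localization only forbids \emph{infinitely many} zeros of a single nonzero function, so a limit with one (or finitely many) zeros yields no contradiction. On top of that, as you yourself note, it is unclear that the rescaled inner functions $\Theta_k$ converge to an inner function, that localization survives the limit, or that the normalized $h_k$ do not tend to $0$ — none of these is established, and the first two are genuinely doubtful.

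The paper's proof is entirely different and much more elementary: it transplants the zeros rather than rescaling the space. Choose a lacunary subsequence $\{\lambda_n\}$ of the zeros of $f$ in $\Gamma_{\gamma,\beta}$ with $|\lambda_{n+1}|>2|\lambda_n|$. The defining inequality of $\Gamma_{\gamma,\beta}$ gives $|1-x/\lambda_n|\ge\ima\lambda_n/|\lambda_n|\gtrsim|\lambda_n|^{-|\beta|-1}$ for $x\in\RR$, so the product $G(z)=\prod_n(1-z/\lambda_n)$ grows faster than any power of $|x|$ on $\RR$. One then picks a much sparser sequence of purely imaginary points $iy_n$ (e.g.\ $y_n=\lambda_{10n}$) so that $\widetilde G(z)=\prod_n(1-z/iy_n)$ satisfies $|\widetilde G|\le C|G|$ on $\RR$, and sets $g=\widetilde G f/G$. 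The criterion \eqref{bab20} (i.e.\ $\overline{g}\Theta\in H^2(\cp)$ on $\RR$) shows $g\in K_\Theta$, and $g$ vanishes at the infinitely many points $iy_n$ of the imaginary axis, contradicting localization at $\infty$. If you want to salvage your write-up, this zero-moving device (the same one used in Proposition \ref{rep1}) is the missing idea; the dilation-and-limit route would require substantial additional hypotheses to control the limiting inner function and does not, as written, produce any contradiction.
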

 
\begin{proof}
Assume the converse and let $f\in K_\Theta$ have infinitely many 
zeros in some domain $\Gamma_{\gamma, \beta}$.
Choose a subsequence $\{\lambda_n\}$ of such zeros 
such that $|\lambda_{n+1}| > 2|\lambda_n|$. Then the infinite product
$G(z) = \prod_n (1-z/\lambda_n)$  converges and $\lim_{|x|\to\infty}
|x|^{-N} |G(x)| = \infty$ for any $N>0$ (the limit is taken over $x\in \RR$). 
Here we use the fact that 
$$
 |1-x/\lambda_n| \ge |x-\lambda_n|/|\lambda_n|
 \ge |\ima\lambda_n|/|\lambda_n|
 \ge \gamma |\lambda_n|^{-|\beta|-1},
$$
and the lacunarity of $\{\lambda_n\}$.

Now we may choose a sequence $iy_n$ which is so sparse that the infinite product
$\widetilde{G}(z) = \prod_n (1-z/iy_n)$  converges and 
$|\widetilde{G}(x)| \le C|G(x)|$ on $\RR$ for some $C>0$ 
(e.g., take $y_n = \lambda_{10n}$). Then 
$g(z) = \widetilde{G}(z)f(z)/G(z)$ is in $H^2(\cp)$ and also
$$
\overline{g(t)}\Theta(t) = \overline{f(t)}\Theta(t) 
\widetilde{G}^*(t)/G^*(t)  \in H^2(\cp),
$$
where $G^*(z) = \overline{G(\overline{z})}$.
So, by \eqref{bab20}, $g\in K_\Theta$ and $g(iy_n) = 0$, a contradiction to the 
localization at $\infty$.
\end{proof}                                                                  
\bigskip                                    


\section{Overcompleteness and localization}
\label{ovloc}

In this section we give the proofs of Theorems \ref{loc} and \ref{high}. 
For this we need one more equivalent form of localization.

\begin{prop}
\label{rep1}
If $\zeta\in \TT$ is not a point of localization for $K_\theta$, then 
for any sequence $\lambda_n\to \zeta$ there exist a subsequence 
$\lambda_{n_k}$ and $f\in K_\theta$, $f\neq 0$, such that $f(\lambda_{n_k})=0$.
\end{prop}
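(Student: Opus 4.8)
The statement is essentially a pointwise reformulation of the failure of localization. The plan is to work in the upper half-plane via the conformal transfer of Section~\ref{sictrans}, so that $\zeta$ becomes the point $\infty$, $\theta$ becomes $\Theta$, and the Stolz angle at $\zeta$ becomes $\Gamma_\gamma = \{z\in\cp:|z|>1,\ \ima z\ge\gamma|\rea z|\}$. By assumption $\zeta$ is not a point of localization, so there is a nonzero $g\in K_\Theta$ with infinitely many zeros $\{\mu_j\}$ in some $\Gamma_\gamma$; passing to a lacunary subsequence as in the proof of Lemma~\ref{rep}, we may assume $|\mu_{j+1}|>2|\mu_j|$ and $\mu_j\to\infty$.

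Now let $\lambda_n\to\zeta$ be an arbitrary sequence, and let $w_n$ be the images in $\cp$; then $|w_n|\to\infty$. The idea is to build a single nonzero $f\in K_\Theta$ that vanishes along a subsequence of the $w_n$ by the same "divide and multiply" trick used in Lemma~\ref{rep}. First I would extract a subsequence $w_{n_k}$ which is lacunary, $|w_{n_{k+1}}|>2|w_{n_k}|$, and so sparse that the Blaschke-type product $\widetilde G(z)=\prod_k(1-z/w_{n_k})$ converges; by choosing the growth of $|w_{n_k}|$ fast enough relative to $\{\mu_j\}$ one can arrange $|\widetilde G(x)|\le C|G(x)|$ on $\RR$, where $G(z)=\prod_j(1-z/\mu_j)$, using exactly the lower bound $|1-x/\mu_j|\ge|\ima\mu_j|/|\mu_j|\ge\gamma|\mu_j|^{-1}$ valid for $\mu_j\in\Gamma_\gamma$. (Here it is a subsequence of the $\lambda_n$, not all of them, that we use — which is precisely what the statement allows.) Then $f(z)=\widetilde G(z)g(z)/G(z)$ lies in $H^2(\cp)$, and since $\overline{f(t)}\Theta(t)=\overline{g(t)}\Theta(t)\,\widetilde G^*(t)/G^*(t)\in H^2(\cp)$, criterion \eqref{bab20} gives $f\in K_\Theta$; moreover $f\not\equiv0$ and $f(w_{n_k})=0$ for all $k$. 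Transferring back to the disc via the unitary $T^{-1}$ yields the desired $f\in K_\theta$ vanishing on $\lambda_{n_k}$.

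One technical point deserving care is that the zeros $\mu_j$ witnessing the failure of localization lie in a Stolz angle, whereas the points $w_n$ need not; but we are free to place the auxiliary zeros $w_{n_k}$ of $\widetilde G$ wherever the subsequence happens to go, because the only thing needed is the majorization $|\widetilde G|\le C|G|$ on the real line, and this can always be achieved by making $\{w_{n_k}\}$ lacunary enough — growth of the partial products of $\widetilde G$ on $\RR$ is controlled by how fast $|w_{n_k}|\to\infty$, which we control by thinning. The main obstacle is thus purely the bookkeeping of choosing the subsequence $\{n_k\}$ so that simultaneously (i) $\widetilde G$ converges, (ii) $|\widetilde G(x)|\lesssim|G(x)|$ on $\RR$, and (iii) $f=\widetilde G g/G\in H^2$; all three follow from a single sufficiently fast growth condition on $|w_{n_k}|$, so once that condition is isolated the proof closes. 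A subtlety is that $g/G$ may have poles at the $\mu_j$ cancelled only formally — one should note $g$ vanishes at each $\mu_j$ to at least the order with which $G$ does (we took the $\mu_j$ from the zero set of $g$, with a simple product $G$), so $g/G\in\Hol(\cp)$ and the $H^2$ bound follows from $|\widetilde G/G|\le C$ on $\RR$ together with $g\in H^2$.
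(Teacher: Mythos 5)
Your proof is correct and follows essentially the same route as the paper: transfer to the upper half-plane, form a lacunary product over the zeros witnessing non-localization, dominate a sparser lacunary product built on a subsequence of the given points by it on $\RR$, and use \eqref{bab20} to check that the resulting quotient stays in $K_\Theta$. The only cosmetic difference is that the paper first relocates the witnessing zeros to the imaginary axis, whereas you work directly with the zeros in the Stolz angle.
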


The converse is trivially true. Observe from Lemma \ref{rep} that localization is only determined by the 
behavior of zeros inside Stolz domains or their generalized form $\Gamma_{\gamma,
\beta}$.

\begin{proof} Pass to $\CC^+$ 
by the conformal mapping \eqref{bab10} 
which maps $\zeta$ to $\infty$. The condition 
that there is a function with infinitely many points on the radius 
means now that there exists $f\in K_\theta$ and $y_n\to +\infty$
such that $f(iy_n)=0$. 
Let $\{\lambda_n\}$ be any sequence tending to infinity. Let us choose
a lacunary product $E=\prod(1-z/iy_n)$. We can always choose an even more 
lacunary product  $G=\prod(1-z/\lambda_{n_k})$ with 
$\lambda_{n_k}\in \{\lambda_n\}$ such that $|G(x)|\leq |E(x)|$ on $\RR$.
Then, making use of \eqref{bab20}, it is easy to see that
$\widetilde{f}(z)=f(z) G(z)/E(z)$ will belong to $K_\theta$ 
and vanish on $\{\lambda_{n_k}\}$.
\end{proof}
\smallskip
\noindent
{\it  Proof of Theorem \ref{loc}}. $(2)\Longrightarrow (1)$ is trivial, any 
$\lambda_n$ which tends to $\zeta$ along the radius gives an overcomplete system.

$(1)\Longrightarrow(2)$ follows from Proposition \ref{rep1}.
\qed
\bigskip
\\
{\it Proof of Theorem \ref{high}}.
Assume that there exists an overcomplete system $\{k_{\lambda_n}\}$
with $\lambda_n\to \zeta$, but $\zeta\notin AC_n(\theta)$ and $\zeta\in AC_{n-1}(\theta)$ 
for some $n\ge 1$. Note that we already know from Corollary \ref{rep5} that necessarily
$\zeta\in AC(\theta)=AC_0(\theta)$.
By \eqref{bab11}, there exists $\alpha\in \TT$  
such that
$$
\int_\TT\frac{d\sigma_\alpha(\eta)}{|1-\bar\eta\zeta|^{2n}}=\infty\quad{and}
\quad \int_\TT\frac{d\sigma_\alpha(\eta)}{|1-\bar\eta\zeta|^{2k}}<\infty, \quad k<n.
$$
Passing to $\CC^+$ by the conformal mapping \eqref{bab10},
we get a space $K_\Theta$ in $\CC^+$ with a Clark measure $\mu
= \sigma_\alpha^\Theta$ such that 
$$
\int_\RR |t|^{2n}d\mu(t)=\infty\quad\text{and}\quad \int_\RR 
|t|^{2k}d\mu(t)<\infty,\qquad k<n
$$
(see the discussion in Subsection \ref{sictrans}).
Consider the measure $d\tilde\mu(t)= |t|^{2n}d\mu(t)$. 
We thus have $\tilde\mu(\RR)=\infty$, but $\displaystyle 
\int_\RR \frac{d\tilde \mu (t)}{t^2+1}<\infty$.
Define an inner function $\widetilde{\Theta}$ in $\cp$ by the formula 
\begin{equation}
\label{tild}
i\frac{1+\widetilde{\Theta}(z)}{1-\widetilde{\Theta}(z)} = 
\int\bigg(\frac{1}{t-z}-\frac{t}{t^2+1} 
\bigg)d\tilde \mu(t).
\end{equation}
Then, clearly, $\tilde \mu  = \sigma_1^{\widetilde{\Theta}}$, 
the Clark measure for $K_{\widetilde{\Theta}}$. 

Note that the model space $K_{\widetilde{\Theta}}$ has no 
localization at $\infty$. Indeed, if $1\ne \lim_{y\to+\infty}\widetilde{\Theta}(iy)$,
then $\infty \notin AC(\widetilde{\Theta})$ by \eqref{ACCM}, since 
$\tilde\mu(\RR) = \infty$.
If $1 = \lim_{y\to+\infty}\widetilde{\Theta}(iy)$ 
and $\infty \in AC(\widetilde{\Theta})$, then by definition of the angular derivative
at $\infty$ we must have $0<\lim_{y\to\infty} y(1-\tilde{\Theta}(iy))<\infty$ so that
$$
\lim_{y\to\infty} \frac{1}{y}
\bigg|\frac{1+\widetilde{\Theta}(iy)}{1-\widetilde{\Theta}(iy)}\bigg| >0.
$$

However, it follows from \eqref{tild} that the above limit 
is zero. We conclude that $\infty$ is not an Ahern--Clark point
for $\widetilde{\Theta}$ and, thus, not a localization
point for $K_{\widetilde{\Theta}}$.

We will now use the unitary operator $V_+:L^2(\tilde{\mu})\to K_{\tilde{\Theta}}$
already mentioned earlier, which in the
half-plane setting is defined by 
$V_+f(z)=(1-\tilde{\Theta}(z))\int_\RR\frac{u(t)}{t-z}d\tilde \mu(t) $. By
Theorem \ref{loc}, there exists $u\in L^2(\tilde \mu)$ such that 
the function $h \in K_{\widetilde{\Theta}}$ defined by 
$$
h(z)=(1-\widetilde{\Theta}(z)) 
\int_\RR\frac{u(t)}{t-z}d\tilde \mu(t)  =
(1-\widetilde{\Theta}(z)) 
\int_\RR\frac{u(t)t^{2n}}{t-z}d\mu(t) 
$$
has infinitely many (simple) zeros of the form $\{iy_m\}$, $y_m\to\infty$.  
Moreover, note that the functions $\varphi_m(z):=\frac{h(z)}{z-iy_m}$ belong to 
$K_{\widetilde{\Theta}}$, vanish at $iy_{\ell}$, $\ell\neq m$, 
and are linearly independent. Write $\varphi_m(z)=V_+u_m$.
Clearly for an appropriate finite linear combination
$v$ of $u_m$, we achieve 
$$
\int_\RR v(t) t^{k} d\mu(t)=0,\qquad k\le 2n-1.
$$
By construction, the function
$$
g(z) = \int_\RR\frac{v(t)t^{2n}}{t-z}d\mu(t) 
$$
vanishes at $iy_m$ for $m$ sufficiently big (i.e., $m\ge m_0$).

Now let 
$$
f(z)=\int_\RR\frac{v(t)}{t-z}d\mu(t).
$$
Clearly, $v\in L^2(\mu)$ and so 
$(\alpha - \Theta) f\in K_\Theta$ (recall that $\mu
= \sigma_\alpha^\Theta$). Let us show that $f(iy_m)=0$ for $m$ sufficiently big. Indeed,
using $1= t^{2n} z^{-2n}-(t-z)\sum_{k=0}^{2n-1} t^k z^{-k-1}$, 
we can write
$$
f(z)
=-\sum_{k=0}^{2n-1}\frac{1}{z^{k+1}}\underbrace{\int_\RR v(t)t^kd\mu(t)}_{0}+
\frac{1}{z^{2n}}\underbrace{\int_\RR\frac{v(t)t^{2n}}{t-z}d\mu(t)}_{g(z)}.
$$
Hence, $f(iy_m) = 0$, $m > m_0$, which contradicts the fact that
$\infty$ is a localization point for $K_\Theta$.
\qed
\medskip
\begin{exe}
{\rm The converse is not true: there exist points 
$\zeta \in \bigcap_{n=0}^{\infty}AC_n(\theta)$ which are not 
points of localization for $K_\theta$. In view of the conformal mapping, 
it is sufficient to construct a Blaschke product $B$ in $\CC^+$ such that 
$\infty \in \bigcap_{n=0}^{\infty}AC_n(B)$ but $\infty$ is not a localization 
point for $K_B$. 

Let $B$ be the Blaschke product with zeros
$$
 z_n = |n|^\alpha  {\rm sign}\, n +i \exp(-|n|^{1/\beta}), 
\qquad n\in\mathbb{Z},
$$
where $1<\alpha<\beta$. Put $E(z) = \prod_n (1-z/\overline{z_n})$. 
It is then clear that $B = \gamma E^*/E$ for some unimodular constant $\gamma$
(recall that we define $g^*(z) = \overline{g(\overline{z})}$). 
By \eqref{nomer} we have $\infty \in \bigcap_{n=0}^{\infty}AC_n(B)$.

By standard estimates of canonical products 
 (see, e.g.,  \cite[Ch. 2]{lev}) we have 
$$
\log\bigg|\frac{E(z)}{{\rm dist}\, (z, \{\overline{z_n}\})}\bigg| 
\asymp |z|^{1/\alpha}, \qquad |z|>1,
$$
and in particular, $\log|E(x)|\asymp |x|^{1/\alpha}$, $x\in\mathbb{R}$, $|x|\ge 1$. 
Let $F(z)$ be an entire function of order less than $1/\alpha$
with imaginary zeros, say, 
$F(z) = \prod_n (1-z/(2^n i))$. Clearly, $F/E \in L^2(\mathbb{R})$
and, hence, $f = F/E \in H^2(\cp)$, since any entire function of order less than 1 is of 
Smirnov class in the upper half-plane. Also, as in the proof of Lemma \ref{rep},
$$
\overline{f(t)}B(t) = \frac{\overline{F(t)}}{\overline{E(t)}}\cdot 
\frac{\overline{E(t)}}{E(t)} = 
\frac{F^*(t)}{E(t)}, \qquad t\in\mathbb{R}.
$$
By similar reasons as above, $F^*/E \in H^2(\cp)$ whence, by \eqref{bab20}, 
$f\in K_B$. Since $f$
has infinitely many imaginary zeros, we conclude that $\infty$
is not a localization point for $K_B$. }
\end{exe}   
\bigskip


\section{Strong localization and quasi-analyticity}\label{S4}
\noindent
{\it Proof of Theorem \ref{quasi}.}
Observe first that strong localization requires by definition that $1\in \bigcap_{n\ge 0}AC_n(\theta)$, and
if $K_{\theta}|_{[0,1]}$ is a class of quasi-analyticity then all the derivatives of $f\in K_{\theta}$ 
are supposed to exist
radially so that $1\in \bigcap_{n\ge 0}AC_n(\theta)$, 
and we can implicitly admit this condition.

Recall that if $1\in \bigcap_{n\ge 0}AC_n(\theta)$ then $(z-1)^{-n}\in L^2(\sigma)$
for every $n\in \NN$  (see for instance \cite[VII-2]{sar}).
Necessarily in this case $\sigma(\{1\})=0$. 
Again we will use the fact that $K_{\theta}=V L^2(\sigma)$, where $\sigma$ is the Clark measure that we 
suppose associated with $\alpha=1$, and the isometry $V$ is 
defined by \eqref{clark-oper}.

Suppose $1$ is a point of strong localization.
Pick an arbitrary function $f=Vh\in K_{\theta}$, and suppose
that $f^{(n)}(1)=0$ for every $n\in\NN^*$. In order to show that $K_{\theta}|_{[0,1]}$ is a class of 
quasi-analyticity we have to check that $f$ vanishes identically.

Since $f$ has a zero of arbitrary order at $1$, the function
$g$ defined by $g(z)=f(z)/(1-\theta(z))$ has also a zero of arbitrary order at 1 (note that $\theta$
has the same regularity at $1$ as any function in $K_{\theta}$, and $\lim_{r\to 1}\theta(r)\neq 1$).
So $\lim_{r\to 1}g^{(n)}(r)
=g^{(n)}(1)=0$ for every $n\in\NN$. Clearly
\[
 g^{(n)}(z)=\frac{d^n}{dz^n}\int_{\TT}\frac{h(\zeta)}{1-\overline{\zeta}z}d\sigma(\zeta)
 =n!\int_{\TT}\frac{\overline{\zeta}^nh(\zeta)}{(1-\overline{\zeta}z)^{n+1}}d\sigma(\zeta).
\]
Observe that 
\[
 \left|\frac{\overline{\zeta}^nh(\zeta)}{(1-\overline{\zeta}z)^{n+1}}\right|
 \lesssim\frac{|h(\zeta)|}{|1-\overline{\zeta}|^{n+1}}.
\]
The function on the right hand side is integrable since $h\in L^2(\sigma)$ and
$(1-z)^{-k}\in L^2(\sigma)$ for every $k$. Since we also have pointwise convergence, by
Lebesgues' dominated convergence theorem we conclude
\[
 0=\lim_{r\to 1} g^{(n)}(r)=n!\int_{\TT}\frac{\overline{\zeta}^nh(\zeta)}
  {(1-\overline{\zeta})^{n+1}}d\sigma(\zeta).
\]
It remains to use an inductive argument. For $n=0$, we conclude that $h\perp (1-\zeta)^{-1}$
(with respect to the scalar product in $L^2(\sigma)$). Suppose 
$h\perp (1-\zeta)^{-k}$ for $1\le k\le n$. Note that
\[
 \frac{\overline{\zeta}^n}
  {(1-\overline{\zeta})^{n+1}}=\frac{1}
  {(1-\overline{\zeta})^{n+1}}-\frac{(1-\overline{\zeta}^n)}
  {(1-\overline{\zeta})^{n+1}}
 =\frac{1}
  {(1-\overline{\zeta})^{n+1}}-\frac{(1+\overline{\zeta}+\cdots+\overline{\zeta}^{n-1})}
  {(1-\overline{\zeta})^{n}}.
\]
Since $\frac{\dst (1+\overline{\zeta}+\cdots+\overline{\zeta}^{n-1})}
{\dst (1-\overline{\zeta})^{n}}$ is in the space generated by $(1-\zeta)^{-k}$, 
$1\le k\le n$, integrating against $h$ in the last term with respect to $d\sigma$ yields 0. Hence
\[
 \int_{\TT}\frac{h(\zeta)}
  {(1-\overline{\zeta})^{n+1}}d\sigma(\zeta)=0,
\]
which achieves the induction.
We have thus proved that if the function $f$ vanishes to arbitrary order at $1$, then 
$h\perp (1-\zeta)^{-n}$ for every $n\in \NN^*$. By strong localization, these functions generate
the whole space $L^2(\sigma)$, so that $h=0$, and hence $f=0$.
\medskip

For the converse, the argument is almost the same.
Suppose $K_{\theta}|_{[0,1]}$ is a class of quasi-analyticity.
Pick any $h\in L^2(\sigma)$ and suppose $h\perp (1-\zeta)^{-n}$, $n\in \NN^*$.
By construction $f=Vh\in K_{\theta}$, and, associating with this $f$ the function $g$ as
above, we notice that 
\[
 \lim_{r\to 1} g^{(n)}(r)=n!\int_{\TT}\frac{\overline{\zeta}^nh(\zeta)}
  {(1-\overline{\zeta})^{n+1}}d\sigma(\zeta)=0
\]
(again observe that ${\zeta}^n/  (1-{\zeta})^{n+1}$ is in the space generated by
$(1-\zeta)^{-k}$, $1\le k\le n+1$). Thus $f=(1-\theta)g$ has zero of arbitrary order at $1$, in other
words $f^{(n)}(1)=0$ for every $n\in \NN$. By quasi-analyticity $f$ has to vanish on $[0,1]$ and
thus on $\DD$, which implies that $h=0$. We conclude that $(1-\zeta)^n$, $n\in\NN^*$,
generates a dense subspace.
\qed
\bigskip
\\
{\it Proof of Corollary \ref{strongloc}.}
We still suppose $\zeta=1$ for simplicity.
By Theorem \ref{quasi}, strong localization is equivalent to quasi-analyticity.

Recall also that we can again assume $1\in \bigcap_{n\ge 0} AC_n(\theta)$.

Now suppose there is a function $f\in K_{\theta}$ with infinitely many zeros $z_k$  
in a Stolz angle at $1$. Then in particular 
$\lim_{z\stackrel{\angle}{\longrightarrow} 1}f(z)=
\lim_{k\to +\infty}f(z_k)=0$. Then also
$\lim_{z\stackrel{\angle}{\longrightarrow}  1}\frac{f(z)-f(1)}{z-1}
=\lim_{k\to +\infty}\frac{f(z_k)}{z_k}=0$. By induction
we obtain that $f^{(n)}(1)=0$ for every $n\in \NN$. Since $K_{\theta}|_{[0,1]}$ is
quasi-analytic, we conclude that $f$ vanishes identically.
\qed
\bigskip
\\
{\it Proof of Theorem \ref{dens}.}
Passing to an equivalent problem in the space $K_\Theta$ in the upper half-plane
(related to $K_\theta$ by \eqref{bab10}) and the point $\infty$,
we obtain a Clark measure $\mu = \sigma^\Theta_\alpha$ for $K_\Theta$ such that 
\begin{eqnarray}\label{condfre}
\int_\RR e^{\vep |t|} d\mu(t) <\infty.
\end{eqnarray}
As we have seen before, strong localization in the the upper half is related with
weighted polynomial approximation which is one of the most classical subjects 
of analysis (for a detailed survey see \cite{fre, koo}).
It is well known that under condition \eqref{condfre} the polynomials are dense in $L^2(\mu)$
(see, e.g., \cite[Theorem II.5.2]{fre}, or \cite[Exercise A4.8.3(c)]{nik2}), and so $\infty$ is a strong localization point
for $K_\Theta$. The value $\alpha$ is not exceptional for $K_\Theta$ since
$\sigma_\alpha^\theta$ has no point mass at $\zeta$.
\qed

\medskip

As already mentioned in the introduction, and contrarily to the Clark measure formulation
\eqref{bab11} or \eqref{ACCM} of the Ahern--Clark condition for
existence of non-tangential higher order derivatives at boundary points, a condition of type
$\int_{\RR}\Phi(t)d\mu(t)<\infty$ (case of the line) cannot give a necessary and sufficient condition
for completeness of polynomials, and hence quasi-analyticity. We will discuss this through the
results of \cite{BS} as presented in \cite[Exercise A4.8.3($\ell$)]{nik2}.

In order to do so, consider the sequence $\Lambda_{\rho}=\{n^{1/\rho}:n=1,2,\ldots\}$, $\rho>0$, and
the weight $w_{m,s}(\lambda)=\lambda^se^{-c\lambda^m}$, $m>1$, $c>1$, $s\in\RR$.
Set $\mu=\sum_{\lambda\in\Lambda_{\rho}}w_{m,s}^p(\lambda)\delta_{\lambda}$.
This singular measure is finite and it is possible (after a possible normalization) to associate with it
a model space $K_{\Theta}$ (we will consider the case $p=2$ here).

According to \cite{BS},
if $m\ge 1/2$, then the polynomials are always dense in 
$$L^p(\mu)=\ell^p(\Lambda_{\rho},
w_{m,s}^p)=\{x=(x(\lambda))_{\lambda\in \Lambda_{\rho}} :\sum_{\lambda\in\Lambda_{\rho}}
|x(\lambda)w_{m,s}(\lambda)|^p <\infty\}
$$
(again, we are only interested in the case $p=2$ here). However
$$
 \int_{\RR}\Phi(t)d\mu(t)=\sum_{n\ge 1}\Phi(n^{1/\rho})\frac{n^{ps/\rho}}{e^{cpn^{m/\rho}}}
$$
converges if $\Phi(x)=O(e^{cpx^{m'}})$ for $m'<m$ and diverges if
$\liminf_{x\to\infty}\Phi(x)e^{-cpx^{m'}}>0$ for $m'\ge m$ (and $s>0$). So, integrability against
a function $\Phi$ cannot be necessary and sufficient.

Considering the case $0<\rho=m<1/2$, there exists a constant $c_0=\pi\ctg(\pi\rho)$ such that
if $c>c_0$, the polynomials are dense, and if $c>c_0$ they are not (there are also some discussions
on the case $c=c_0$; see \cite{BS} or \cite[Exercise A4.8.3($\ell$)]{nik2} for all these
results). In this situation the integrability of (the sub-exponential function) 
$\Phi(t)=e^{c_0px^{\rho}}$ against
$d\mu$ thus gives a hint at quasi-analyticity or not. Still, the function $\Phi$ heavily depends on
$\rho$ and thus on the space $K_{\Theta}$. So there is no universal function characterizing
quasi-analyticity in terms of the Clark measure as is the case for $n$-th order derivatives
given in \eqref{ACCM}.
\bigskip


\section{UMNR sequences of reproducing kernels}

\begin{lem} 
\label{rep2}
If a normalized sequence $\{x_n\}$ is uniformly minimal and 
contains no Riesz sequences, then $\{x_n\}$ contains a subsequence $\{x_{n_k}\}$ 
such that 
\begin{enumerate}
\item[(i)] $\displaystyle x_{n_k}\xrightarrow{\mathrm{w}}x$,
\item[(ii)] $\displaystyle x_{n_k}-x$ is a Riesz sequence,
\item[(iii)] $x\notin\overline{\Lin}\{x_{n_k}-x\}$.
\end{enumerate}
Conversely, any such $\{x_{n_k}\}$ is UMNR.
\end{lem}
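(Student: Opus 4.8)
The plan is to prove the direct implication by combining Lemma \ref{rep4} with an iterated extraction argument, and then to verify the converse by unpacking the definitions of uniform minimality and Riesz sequence. For the direct statement, start with the uniformly minimal normalized sequence $\{x_n\}$ containing no Riesz sequences. Since the closed unit ball of a Hilbert space is weakly sequentially compact, some subsequence of $\{x_n\}$ converges weakly to a vector $x$; by Lemma \ref{rep4}, if $x=0$ then that subsequence (after a further extraction) is a Riesz sequence, contradicting the hypothesis, so $x \ne 0$. Replacing $\{x_n\}$ by this weakly convergent subsequence, I have (i). Now consider the differences $y_n := x_n - x$. First I would check that $\{y_n\}$ is a normalized sequence \emph{up to equivalence}: since $x_n \xrightarrow{\mathrm{w}} x$ and $\|x_n\|=1$, the quantity $\|y_n\|^2 = 1 - 2\re(x_n,x) + \|x\|^2 \to 1-\|x\|^2$, which is a strictly positive limit provided $\|x\| < 1$. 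That $\|x\|<1$ is itself a consequence of uniform minimality: if $x$ were a unit vector weakly approached by the unit vectors $x_{n_k}$, then $x_{n_k}\to x$ in norm, and a norm-convergent normalized sequence of distinct vectors cannot be uniformly minimal (the distance from $x_{n_k}$ to $\overline{\Lin}\{x_{n_j}:j\ne k\}$ would go to $0$). So the $y_n$ have norms bounded below and above, and $y_n \xrightarrow{\mathrm{w}} 0$. Applying Lemma \ref{rep4} to $\{y_n/\|y_n\|\}$ and passing to a subsequence, I get that $\{y_{n_k}\}$ is a Riesz sequence, which is (ii).

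For (iii), I argue by contradiction: suppose $x \in \overline{\Lin}\{y_{n_k}\}$. The key observation is that uniform minimality of $\{x_{n_k}\}$ is equivalent to the existence of a biorthogonal system $\{x_{n_k}^*\}$ with $\sup_k \|x_{n_k}^*\| < \infty$ and $(x_{n_k}, x_{n_k}^*) = \delta_{k\ell}$. For such a system one has, for a fixed index $\ell$,
\[
1 = (x_{n_\ell}, x_{n_\ell}^*) = (y_{n_\ell} + x, x_{n_\ell}^*) = (y_{n_\ell}, x_{n_\ell}^*) + (x, x_{n_\ell}^*).
\]
Since $y_{n_k}\xrightarrow{\mathrm{w}}0$, we get $(y_{n_\ell}, x_{n_\ell}^*) \to 0$ only if the $x^*$ are also varying — so instead I would use that $\{y_{n_k}\}$ is a Riesz sequence to build its own biorthogonal functionals $\{y_{n_k}^*\}$ inside $\overline{\Lin}\{y_{n_k}\}$ with uniformly bounded norms, and exploit that $x \in \overline{\Lin}\{y_{n_k}\}$ forces $x = \sum_k c_k y_{n_k}$ with $(c_k) \in \ell^2$, hence $c_k \to 0$. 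Then $x_{n_k} = y_{n_k} + x = y_{n_k} + \sum_j c_j y_{n_j}$, so in the Riesz-sequence coordinates $x_{n_k}$ corresponds to $e_k + (c_j)_j$; the distance from this vector to the closed span of $\{e_j + (c_i)_i : j \ne k\}$ is, up to the Riesz constants, comparable to the $\ell^2$-distance from $e_k$ to $\Span\{e_j - e_k : j\ne k\}$ shifted appropriately, and one computes this distance tends to $0$ as $k\to\infty$ because $c_k \to 0$ — contradicting uniform minimality. This distance computation is the one genuinely delicate point, and I expect it to be the main obstacle: one must be careful that adding the \emph{same} fixed vector $x$ to every element of a Riesz sequence destroys uniform minimality precisely when that vector lies in the closed span. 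A cleaner route, which I would actually prefer to write up, is: uniform minimality of $\{x_{n_k}\}$ plus (ii) implies that the angle between $x_{n_\ell}$ and $\overline{\Lin}\{x_{n_k}: k\ne \ell\}$ is bounded below; but if $x = \sum c_k y_{n_k} \in \overline{\Lin}\{y_{n_k}\}$, then $x_{n_\ell} - \sum_{k\ne \ell}\frac{-c_k}{1+c_\ell}\,$(corrections) approximates a multiple of $y_{n_\ell}$, whose norm relative to the span of the others stays bounded below, forcing a contradiction with $c_\ell \to 0$ only through the explicit estimate — so the estimate cannot be fully avoided.

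For the converse, suppose $\{x_{n_k}\}$ satisfies (i)–(iii). To see it is uniformly minimal: the Riesz sequence $\{x_{n_k}-x\}$ has uniformly bounded biorthogonal functionals $\{\psi_k\}$ supported in $\overline{\Lin}\{x_{n_k}-x\}$; condition (iii) says $\dist(x, \overline{\Lin}\{x_{n_k}-x\}) =: d > 0$, so there is a functional $\varphi$ of norm $1/d$ annihilating $\overline{\Lin}\{x_{n_k}-x\}$ with $(x,\varphi)=1$. One then checks that $x_{n_k}^* := \psi_k + \big(1 - (x,\psi_k)\big)\varphi$, or a similar explicit correction, is biorthogonal to $\{x_{n_k}\}$ with $\sup_k\|x_{n_k}^*\|<\infty$ — using $(x_{n_j}-x,\varphi)=0$ and $(x_{n_j}-x,\psi_k)=\delta_{jk}$ — which gives uniform minimality. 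To see $\{x_{n_k}\}$ contains no Riesz sequence: by Lemma \ref{rep4} it suffices to show no subsequence of $\{x_{n_k}\}$ converges weakly to $0$; but any subsequence inherits (i), so it converges weakly to $x \ne 0$ (note $x\ne 0$ since otherwise (iii) fails), hence no subsequence is weakly null. This completes the argument.
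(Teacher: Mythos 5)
Your extraction of the weak limit, the proof that $x\neq 0$, the derivation of (i)--(ii), and the whole converse direction are essentially the paper's argument and are sound (in the converse, the explicit formula should be $x_{n_k}^{*}=\psi_k-\overline{(x,\psi_k)}\,\varphi$ with $\varphi\perp \overline{\Lin}\{x_{n_k}-x\}$, $(x,\varphi)=1$, which is the ``similar explicit correction'' you allow for). The genuine gap is (iii), exactly the step you flag as the main obstacle. Your principal route asserts that if $x=\sum_k c_k y_{n_k}\in \overline{\Lin}\{y_{n_k}\}$ then $\dist\big(x_{n_k},\overline{\Lin}\{x_{n_j}:j\neq k\}\big)\to 0$ as $k\to\infty$ ``because $c_k\to 0$''. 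That is false in general, so no contradiction can be reached this way: take $\{y_{n_k}\}$ orthonormal and $x=c_1y_{n_1}\neq 0$. For every $k\geq 2$ the vector $y_{n_k}$ itself is a biorthogonal functional for $\{x_{n_j}\}=\{y_{n_j}+x\}$ at index $k$ (since $(x,y_{n_k})=0$), so those distances stay bounded below by a fixed constant; the failure of (uniform) minimality occurs only at the index $k=1$, where in fact $x_{n_1}$ lies in the closed span of the others (note $\frac1N\sum_{j=2}^{N+1}x_{n_j}\to x$ in norm, and $x_{n_1}$ is a scalar multiple of $x$). So the contradiction lives at the indices where $c_k\neq 0$, not ``at infinity'', and neither of your two sketches produces it.

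The route you abandoned is the one that works, and you abandoned it because of an indexing slip: keep the functional index fixed and let the vector index run. With a uniformly bounded biorthogonal system $\{x_{n_\ell}^{*}\}$, fix $\ell$ and let $k\to\infty$ in $(y_{n_k},x_{n_\ell}^{*})+(x,x_{n_\ell}^{*})=\delta_{k\ell}$; since $y_{n_k}\xrightarrow{\mathrm{w}}0$ and $x_{n_\ell}^{*}$ is fixed, this yields $(x,x_{n_\ell}^{*})=0$ for every $\ell$, hence $\{x_{n_\ell}^{*}\}$ is biorthogonal to the Riesz sequence $\{y_{n_k}\}$. Projecting $x_{n_\ell}^{*}$ onto $H_0=\overline{\Lin}\{y_{n_k}\}$ gives the biorthogonal system of a Riesz basis of $H_0$, which is itself a Riesz basis of, in particular complete in, $H_0$; so if $x\in H_0$, the relations $(x,x_{n_\ell}^{*})=0$ force $x=0$, contradicting $x\neq 0$. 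This is the paper's proof of (iii). (Alternatively, your coordinate picture can be repaired: expanding a putative biorthogonal functional at index $\ell$ in the Riesz-basis coordinates forces its coefficient sequence to equal $(\delta_{k\ell}-t)_k$ for a constant $t$, and square-summability gives $t=0$ and then $c_\ell=0$; so minimality already fails at every index with $c_\ell\neq 0$, which is the correct mechanism behind (iii).)
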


\begin{proof} 
We start with the sufficient condition. Since $\{x_n\}$ is uniformly bounded, we 
can pick a weakly convergent subsequence $\{x_{n_k}\}$
of $\{x_n\}$ (which obviously is UMNR). By Lemma \ref{rep4}, 
$\displaystyle x_{n_k}\xrightarrow{\mathrm{w}}x\neq 0$.
Since $\{x_{n_k}\}$ is uniformly minimal, no subsequence can converge in norm, so that
we can assume $0<\varepsilon \le \|x_{n_k}-x\|\le M$ and hence $\{x_{n_k}-x\}$ can be
supposed normalized and $\displaystyle x_{n_k}-x\xrightarrow{\mathrm{w}}0$. Again by
Lemma \ref{rep4}, and passing possibly to a subsequence we may assume that 
$\{x_{n_k}-x\}$ is a Riesz sequence. 

It remains to check (iii). Since 
$\{x_{n_k}\}$ is uniformly minimal, there exists a biorthogonal
system $\{y_l\}$ such that $\sup_l\|y_l\|<\infty$. Let $ z_k=x_{n_k}-x$, which was shown
to be a
Riesz sequence. Then 
$$
(z_k,y_l)+(x,y_l)=(z_k+x,y_l)=\delta_{k,l}
$$ 
and, for fixed $l$ and since $\{z_k\}$ is a Riesz sequence, we have $(z_k,y_l)\to 0$ 
as $k\to \infty$. So $(x,y_l)=0$ and hence $\{y_l\}$ is biorthogonal to $\{z_k\}$.
Let $H_0=\overline{\Lin}\{z_k\}$ and  $y_l=y_l'+y_l''$ where $y_l'\in H_0$ 
and $y_l'' \in H_0^\perp$. Then $\{y_l'\}$ is biorthogonal to $\{z_k\}$ in $H_0$ 
and
$$
(x,y_l)=0\iff (x,y_l')+(x,y_l'')=0.
$$
If $x\in H_0$, then $(x,y_l'')=0$ and so $(x,y_l')=0$. However,  
$\{y_l'\}$ is the biorthogonal of a Riesz basis in $H_0$ and thus is a Riesz basis itself in $H_0$.
Whence $x=0$ in contradiction to our hypothesis on $x$. 
Thus $x\notin H_0$ which shows (iii).

Conversely, suppose $\{x_{n_k}\}$ satisfies (i)--(iii). In particular, by (iii) we have
$x\notin H_0$. In the same notation as introduced before, since $\{z_k\}$ is a Riesz basis in $H_0$,
its biorthogonal $\{y_l'\}$ is also a Riesz basis in $H_0$.
Clearly we can always 
find $y_l''$ with bounded norms to get $(x,y_l'')=-(x,y_l')$, and so 
the vectors $y_l=y_l'+y_l''$ have uniformly bounded norms
and form a biorthogonal system to $\{z_k+x\}$. Hence $\{x_{n_k}\}$ is uniformly minimal. Note that 
$x\neq 0$ (remember that $x\notin H_0$),
and so, (i) and Lemma 2.1 imply that $\{x_{n_k}\}$ cannot contain
any Riesz sequence.
\end{proof}
\medskip

We state an immediate consequence of the above lemma which we will use in the proof of
Theorem \ref{umnr}.

\begin{coro}\label{coroUMNR}
If $\{x_n\}$ is a normalized sequence tending weakly to $x\neq 0$ which has a 
subsequence not converging in norm to $x$, then $\{x_n\}$ contains a UMNR sequence.
\end{coro}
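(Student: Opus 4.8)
The plan is to exhibit a subsequence of $\{x_n\}$ that satisfies conditions (i)--(iii) of Lemma~\ref{rep2}, and then invoke the converse (sufficiency) direction of that lemma. Condition (i) comes for free, since every subsequence of $\{x_n\}$ still tends weakly to $x$. For (ii), I would first use the hypothesis to pass to a subsequence, which I relabel as $\{x_n\}$, along which $\|x_n-x\|\geq\varepsilon$ for some $\varepsilon>0$; since also $\|x_n-x\|\leq 1+\|x\|$, the vectors $z_n:=x_n-x$ are uniformly bounded above and below in norm, and $z_n\xrightarrow{\mathrm{w}}0$ because $x_n\xrightarrow{\mathrm{w}}x$. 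Applying Lemma~\ref{rep4} to the normalized sequence $\{z_n/\|z_n\|\}$ (which still tends weakly to $0$) yields a subsequence $\{z_{n_k}/\|z_{n_k}\|\}$ that is a Riesz sequence; as the scalars $\|z_{n_k}\|$ lie in the fixed interval $[\varepsilon,1+\|x\|]$, the sequence $\{z_{n_k}\}=\{x_{n_k}-x\}$ is itself a Riesz sequence, which is (ii).

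The one substantive point is (iii), namely $x\notin\overline{\Lin}\{x_{n_k}-x\}$: the argument used for (iii) in the proof of Lemma~\ref{rep2} relied on uniform minimality of $\{x_n\}$, which is not assumed here. Instead I would use that a Riesz sequence has trivial infinite tail. Let $T\colon\ell^2\to H_0:=\overline{\Lin}\{z_{n_k}:k\geq 1\}$ be the isomorphism sending the $k$-th standard basis vector to $z_{n_k}$. Since $T$ is a homeomorphism onto $H_0$, it carries closed subspaces to closed subspaces and commutes with intersections, so $\overline{\Lin}\{z_{n_k}:k\geq m\}=T\big(\{v\in\ell^2:v_j=0,\ j<m\}\big)$ and hence
\[
\bigcap_{m\geq 1}\overline{\Lin}\{z_{n_k}:k\geq m\}
=T\Big(\bigcap_{m\geq 1}\{v\in\ell^2:v_j=0,\ j<m\}\Big)=T(\{0\})=\{0\}.
\]
Because $x\neq 0$, there is some $m_0$ with $x\notin\overline{\Lin}\{x_{n_k}-x:k\geq m_0\}$, which is (iii) for the tail subsequence $\{x_{n_k}\}_{k\geq m_0}$.

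Finally, the subsequence $\{x_{n_k}\}_{k\geq m_0}$ satisfies (i) (weak limit $x\neq 0$), (ii) ($\{x_{n_k}-x\}_{k\geq m_0}$ is a subsequence of a Riesz sequence, hence a Riesz sequence), and (iii) (just established), so by the converse part of Lemma~\ref{rep2} it is UMNR; this proves the corollary. The main obstacle, as indicated, is obtaining (iii) without the uniform minimality hypothesis used in Lemma~\ref{rep2}, and the triviality of the intersection of the tail spans of a Riesz sequence is what replaces the biorthogonality argument there.
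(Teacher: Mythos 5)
Your proof is correct and follows essentially the same route as the paper: pass to a subsequence with $\|x_{n_k}-x\|$ bounded below, apply Lemma~\ref{rep4} to the differences to get a Riesz sequence, arrange (iii) by dropping to a further subsequence whose closed span omits $x$, and conclude by the converse direction of Lemma~\ref{rep2}. Your tail-span intersection argument simply makes explicit the step the paper states as a claim (``we can always pass to a subsequence generating a subspace not containing $x$''), and it is a valid justification.
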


\begin{proof}
In view of the hypotheses, we can suppose $0<\varepsilon\le
\|x_{n_k}-x\|\le M<+\infty$ for some suitable subsequence. 
Also $x_{n_k}-x\xrightarrow{\mathrm{w}}0$, and by Lemma \ref{rep4}, passing to a subsequence, we
can suppose that $\{x_{n_k}-x\}$ is a Riesz sequence. This allows us to claim that 
if $x\in H_0$ then we can always pass to 
a subsequence generating a subspace not containing $x$. It remains to apply Lemma \ref{rep2}
to conclude.
\end{proof}

\medskip
\noindent
{\it Proof of Theorem \ref{umnr}}.
Since $\zeta\in\sigma(\theta)$, there exists a sequence $(z_n)_n\subset\DD$ converging to
$\zeta$  such that $\theta(z_n)\to 0$, $n\to\infty$.
In particular 
\[
 \|k_{z_n}\|^2=\frac{1-|\theta(z_n)|^2}{1-|z_n|^2}\asymp\frac{1}{1-|z_n|^2} \to \infty,
 \quad n\to\infty.
\]
(note that in view of the Ahern--Clark condition,
the sequence $(z_n)_n$ has to tend tangentially to $\zeta$).  
On the other hand, when $\lambda\to \zeta$ non-tangentially, then, since $\zeta\in AC(\theta)$,
$k_\lambda\to k_\zeta$ in $K_\theta$, in particular $\|k_\lambda\|\to \|k_\zeta\|$. 
Thus  we may choose a 
sequence $\lambda_n$ (on suitable intervals connecting $z_n$ to some fixed Stolz angle
at $\zeta$) such that $\lambda_n\to \zeta$, but 
$$
\|k_{\lambda_n}\|=2\|k_\zeta\|.
$$
Let us show that  $k_{\lambda_n}\xrightarrow{\mathrm{w}}k_\zeta$. Indeed, 
for $g\in K_\theta\cap \overline{C(\DD)}$ (which, as already mentioned earlier, 
is a dense subset of $K_\theta$,
see \cite{al8}), 
$$
(g,k_{\lambda_n})=g(\lambda_n)\to g(\zeta)=(g,k_\zeta).
$$
Since the norms $\|k_{\lambda_n}\|$ are bounded, by the Banach--Steinhaus theorem, 
$k_{\lambda_n}\xrightarrow{\mathrm{w}}k_\zeta$. Thus 
$$
\widetilde{k}_{\lambda_n}=\frac{k_{\lambda_n}}{\|k_{\lambda_n}\|} 
\xrightarrow{\mathrm{w}}\frac{k_\zeta}{2\|k_{\zeta}\|}=\frac{\widetilde{k}_{\zeta}}{2}.
$$
and in particular $\widetilde{k}_{\lambda_n}$ has no subsequence converging in norm to 
$\widetilde{k}_{\zeta}$.
By Corollary \ref{coroUMNR}, $\{\widetilde{k}_{\lambda_n}\}$ is UMNR.
\qed

\begin{rem}
{\rm Note that Theorem \ref{umnr} provides a description of those $\lambda_n\to \zeta$ 
for which $\widetilde{k}_{\lambda_n}$ is UMNR (or contains such system). 
All we need is that $\sup\|k_{\lambda_n}\|<\infty$ and 
$\|k_{\lambda_n}\|\not\to \|k_\zeta\|$.  }
\end{rem}

Before discussing explicit examples of UMNR sequences $\{\lambda_n\}$, we briefly discuss the
proof of Theorem \ref{MNUM}.

\medskip 

\noindent
{\it Proof of Theorem \ref{MNUM}}.
By Corollary \ref{supcond} we can suppose that $\sup_{n}\|k_{z_n}\|<\infty$.

Consider (2).  By the above remark, since $\|k_{\lambda_n}\|\not\to \|k_\zeta\|$, we deduce
that $\widetilde{k}_{\lambda_n}$ contains a UMNR sequence and in particular 
a uniformly minimal sequence.

Consider (1). Since $\zeta$ is not a point of localization, there exists an infinite sequence
$\{z_n\}$ and a non-vanishing function $f\in K_{\theta}$, such that $f(z_n)=0$, $n\in\NN$.
By the backward shift invariance, we can assume that the zeros of $f$ are simple.
Then the sequence
$\{\widetilde{k}_{z_n}\}$ is minimal. Indeed, the sequence $\{\varphi_n\}$ defined by
$\varphi_n(z)=f(z)/(z-z_n)$ gives a biorthogonal system. On the other hand
$\{\widetilde{k}_{z_n}\}$ cannot be uniformly minimal. Indeed, since we are in an Ahern--Clark point,
we have $k_{z_n}\to k_{\zeta}$, and hence the distance $\|k_{z_n}-k_{z_{n+1}}\|$ goes to
zero while $\|k_{z_n}\|$ is uniformly bounded, contradicting thus uniform minimality.
\qed

\begin{exe} {\rm We give an example of a UMNR system of normalized 
reproducing kernels $\{\tilde k_{\lambda_n}\}$ having 
an additional property: the system $\{k_{\lambda_n}\}$ is complete in $K_\Theta$. 
To simplify the estimates we will construct an example in the half-plane setting.

Let $z_n = x_n +i y_n$, $n\ge 1$, be a sequence in $\cp$ such that
$x_n>0$, $x_{n+1}>x_n +1$ and $\sup_n \sum_{k\ne n} |x_n-x_k|^{-1} <\infty$.
Furthermore, let $0< s_n < 1$, put $t_n =x_n + s_n$ for $n\ge 2$,
and assume that 
\begin{equation}
\label{bab12}
\frac{y_n}{s_n^2} \asymp \frac{1}{t_n^2}, 
\qquad
\sum_{k\ne n} \frac{y_k}{(t_n-x_k)^2} \lesssim \frac{1}{t_n^2},
\qquad
\sup_n \sum_{k\ne n}\frac{s_k}{|t_k-t_n|}<\infty.
\end{equation}
Clearly, taking sufficiently small $y_n$ and defining $s_n = x_n\sqrt{y_n}$
we can achieve all the properties.

We will show that under the above assumptions
$\{\tilde k_{t_n}\}_{n\ge 2}$ is a complete UMNR system
in $K_\Theta$ where $\Theta$ is the Blaschke product with zeros $z_n$.

Define entire functions $E$ and $G$ as zero genus canonical products 
with zeros $\{\overline{z}_n\}_{n \ge 1}$ and $\{t_n\}_{n\ge 2}$ 
respectively. Then standard estimates
of canonical products (combined with the last inequality in \eqref{bab12}) 
show that for $z\in \CC$ such that 
$|z-t_n| = {\rm dist}\, (z, \{t_k\})$ we have
\begin{equation}
\label{bab15}
\bigg|\frac{G(z)}{E(z)}\bigg| \asymp \frac{|z-t_n|}{|z-x_n +iy_n|}\cdot\frac{1}{|z|+1}. 
\end{equation}
Indeed, if $|z-t_n| = {\rm dist}\, (z, \{t_k\})$,
then $\sum_{k\ne n} |z-\overline z_k|^{-1} \le C$ 
for some constant $C$ independent on $z$ and $n$, whence
$$
\sum_{k\ne n} \log \bigg|\frac{1-z/t_k}{1-z/\overline {z}_k}\bigg| = 
\sum_{k\ne n} \log 
\bigg|1+ \frac{\overline z_k - t_k}{ z- \overline{z}_k} \bigg| + O(1) = O(1).
$$
In particular, it follows from \eqref{bab15}, that
\begin{equation}
\label{bab16}
\bigg|\frac{G'(t_n)}{E(t_n)}\bigg| \asymp\frac{1}{s_n t_n}.
\end{equation}
Note that $\Theta(z):=\overline{E(\overline{z})}/E(z)$ is a Blaschke product in $\cp$
with zeros $z_n$. Moreover, the class $E \cdot K_\Theta$ consists of entire functions
and coincides with the so-called de Branges space $\mathcal{H}(E)$
(see \cite{br}). 

Note that, by \eqref{bab12} and a straightforward estimate,
$$
2\pi \|k_{t_n}\|^2  = |\Theta'(t_n)| \le 2\sum_k \frac{y_k}{(t_n-x_k)^2}  
= \frac{2y_n}{s_n^2} + 2\sum_{k\ne n} \frac{y_k}{(t_n-x_k)^2} \lesssim \frac{1}{t_n^2}
 \to 0,\quad n\to\infty.
$$
It follows that $\{\tilde k_{t_n}\}$ does not contain any subsequence weakly 
converging to zero. Indeed, taking $f(z) = (z-\overline{z}_1)^{-1} \in K_\Theta$ 
we see that $|(f,\tilde{k}_{t_n})|=|f(t_n)|/\|k_{t_n}\|\gtrsim t_n^2\to\infty,
n\to\infty$.
Thus, by Lemma \ref{rep4}, $\{\tilde k_{t_n}\}$ does not contain Riesz subsequences. 
In fact, in the upper half-plane case 
the condition  $\sup_\lambda |\lambda| \cdot \|k_\lambda\|$ 
is necessary and sufficient for $\{\tilde k_\lambda\}$ to contain a Riesz subsequence
(compare with Corollary \ref{supcond}).

Let us verify that $\{\tilde k_{t_n}\}$ is uniformly minimal. It is easy to see
that the biorthogonal system to $\{\tilde k_{t_n}\}_{n\ge 2}$ is given by 
$$
g_n(z) = \frac{E(t_n)\|k_{t_n}\|}{G'(t_n)} \cdot \frac{G(z)}{E(z)(z-t_n)}.
$$ 
We need to show that $\sup_n \|g_n\|<\infty$. 
Let $I_1 = \big[0, \frac{x_1 + x_2}{2}\big]$ 
and $I_k = \big[\frac{x_{k-1} + x_k}{2}, \frac{x_{k} + x_{k+1}}{2}\big]$, $k>1$.
Making use of \eqref{bab15} and \eqref{bab16}, we see that
$$
\begin{aligned}
\|g_n\|^2 & \lesssim \int_{I_n}
\frac{s_n^2 dx}{((x-x_n)^2+y_n^2)(x^2+1)} 
+
 \sum_{k\ne n} \int_{I_k}
\frac{s_n^2 (x-t_k)^2 dx }{((x-x_k)^2+y_k^2)(x-t_n)^2(x^2+1)}
+ O(1) \\
& \lesssim
\frac{s_n^2}{y_nt_n^2} + 
s_n^2 \sum_{k\ne n} \int_{I_k} \frac{(x-x_k)^2 +s_k^2}
{((x-x_k)^2+y_k^2)(x-t_n)^2(x^2+1)}dx 
+ O(1) \\
& \lesssim 
s_n^2 \sum_{k\ne n}   \frac{1}{t_k^2 |t_n-t_k|} 
+
s_n^2 \sum_{k\ne n}   \frac{s_k^2}{y_k t_k^2 |t_n-t_k|^2}  +O(1) = O(1).
\end{aligned}
$$
In the last inequality we used the last condition in 
\eqref{bab12} and the fact that $s_n<1$.

Analogous estimates show that $G/E \notin H^2$. Indeed, 
$$
\sum_{k=2}^\infty \int_{I_k} \bigg|\frac{G(t)}{E(t)}\bigg|^2 dt \asymp 
\sum_{k=2}^\infty \int_{I_k} \frac{(x-t_k)^2}{((x-x_k)^2+y_k^2)(x^2+1)}dx 
\asymp 
\sum_{k=2}^\infty \frac{s_k^2}{y_kt_k^2}.
$$
However, the last series diverges by the first condition in \eqref{bab12}.

Finally, we need to show that $\{k_{t_n}\}_{n\ge 2}$ is complete in $K_\Theta$. Assume
that $h\in K_\Theta$ is orthogonal to $\{k_{t_n}\}_{n\ge 2}$, whence $h(t_n) = 0$. Then 
the entire function $H=Eh$ is divisible by $G$, i.e., $H= GS$ for some entire function 
$S$. We have $S = hE/G$ in $\cp$ whence $S$ is in the Smirnov class in $\cp$
(see, e.g., \cite[Part 2, Chapter 1]{hj})
and $|y^{-1}S(iy)| \to 0$, $y\to+\infty$, by \eqref{bab15}. On the other hand,
$$
\overline{S(\overline z)} = \overline{h(\overline z)} \cdot
\frac{\overline{E(\overline z)}}{E(z)} \cdot \frac{E(z)}{\overline{G(\overline z)}},
$$ 
and so $S$ is in the Smirnov class in the lower half-plane and 
$|y^{-1} S(iy)|\to 0$, $y\to -\infty$ (we used the fact that
$h\in K_\Theta$ and so $\overline{h(\overline z)} \cdot
\overline{E(\overline z)}/E(z)\in H^2(\cp)$). By a theorem of M.G.~Krein
\cite[Part II, Chapter 1]{hj} $S$ is of zero exponential type and 
thus the estimates along the imaginary axis imply that 
$S$ is a constant. If $S \ne 0$, then $G/E \in H^2$, a contradiction. }
\end{exe}

\begin{rem}
{\rm In the above example the constructed UMNR system is also complete
in $K_\theta$.
Do such examples exist in the general case? Namely, assume that $\sigma(\theta)$
consist of one point (or of finite number of points). Does there exist a complete 
UMNR system of reproducing kernels? }
\end{rem}

\begin{rem}
{\rm In Example 1 the points were chosen on the real axis. This is not always possible.
E.g., if $z_n = n + i n^{-3/2}$, $n\in \mathbb{N}$, then $\infty$ is an Ahern--Clark 
point for the corresponding Blaschke product $\Theta$, but 
$t^2 |\Theta'(t)| \to\infty$, $t\to \infty$. Hence, any minimal system 
of normalized reproducing kernels $\{\tilde k_{t_n}\}$ contains a Riesz subsequence. }
\end{rem}

\end{document}